%
%
%
%
%
\documentclass[12pt]{amsart}
\usepackage{amssymb}
\usepackage[mathscr]{eucal}

\usepackage{colordvi}
\usepackage{graphicx}

\usepackage{rotating}
\usepackage{multirow}

\usepackage{icomma}
\usepackage[ps2pdf]{hyperref}
\usepackage{url}

%
%
\usepackage{vmargin}
\setpapersize{USletter}
\setmargrb{.9in}{.9in}{.9in}{.9in} 
 

\numberwithin{equation}{section}
\newtheorem{theorem}{Theorem}[section]
\newtheorem{defn}[theorem]{Definition}
\newtheorem{proposition}[theorem]{Proposition}
\newtheorem{lemma}[theorem]{Lemma}
\newtheorem{cor}[theorem]{Corollary}
\newtheorem{conj}[theorem]{Conjecture}
\newtheorem{sconj}[theorem]{Secant Conjecture}
\newtheorem{example}[theorem]{Example}
\newtheorem{remark}[theorem]{Remark}

\theoremstyle{remark}

\newcounter{FNC}[page]
\def\fauxfootnote#1{{\addtocounter{FNC}{2}\Magenta{$^\fnsymbol{FNC}$}%
     \let\thefootnote\relax\footnotetext{\Magenta{$^\fnsymbol{FNC}$#1}}}}

\newcommand{\Fdot}{F_{\bullet}}

\newcommand{\C}{{\mathbb{C}}}

\renewcommand{\P}{{\mathbb{P}}}

\newcommand{\R}{{\mathbb{R}}}

\newcommand{\W}{\mbox{\rm W}}

\newcommand{\be}{{\bf e}}

\DeclareMathOperator{\rank}{rank}
\DeclareMathOperator{\Span}{span}

\DeclareRobustCommand{\I}{\includegraphics{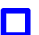}}
\DeclareRobustCommand{\II}{\includegraphics{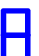}}
\DeclareRobustCommand{\Tw}{\includegraphics{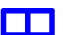}}
\DeclareRobustCommand{\sI}{\includegraphics{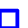}}

\DeclareRobustCommand{\ThI}{\includegraphics{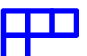}}
\DeclareRobustCommand{\sThI}{\includegraphics{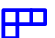}}
\DeclareRobustCommand{\sTT}{\includegraphics{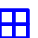}}
\newcommand{\Th}{\includegraphics{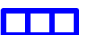}}
\newcommand{\TI}{\includegraphics{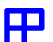}}
\newcommand{\TII}{\includegraphics{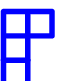}}
\newcommand{\bTI}{\includegraphics{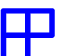}}
\newcommand{\DeCo}[1]{\Blue{#1}}
\newcommand{\demph}[1]{\DeCo{{\sl #1}}}

\newcommand{\M}[1]{\Magenta{#1}}

\title{The Secant Conjecture in the real Schubert calculus}

\author[Garc{\'\i}a-Puente]{Luis D. Garc\'ia-Puente}
\address{Luis Garc\'ia-Puente \\
         Department of Mathematics and Statistics\\
         Sam Houston State University\\
         Huntsville\\
         TX \ 77341\\
         USA}
\email{lgarcia@shsu.edu}
\urladdr{\url{http://www.shsu.edu/~ldg005}}
\author[Hein]{Nickolas Hein}
\address{Nickolas Hein \\
         Department of Mathematics\\
         Texas A\&M University\\
         College Station\\
         Texas \ 77843\\
         USA}
\email{nhein@math.tamu.edu}
\urladdr{\url{http://www.math.tamu.edu/~nhein}}
\author[Hillar]{Christopher Hillar}
\address{Christopher Hillar \\
         Mathematical Sciences Research Institute\\
         17 Gauss Way\\
         Berkeley, CA 94720-5070\\
         USA}
\email{chillar@msri.org}
\urladdr{\url{http://www.msri.org/people/members/chillar}}
\author[Mart\'in del Campo]{Abraham Mart\'in del Campo}
\address{Abraham Mart\'in del Campo \\
         Department of Mathematics\\
         Texas A\&M University\\
         College Station\\
         Texas \ 77843\\
         USA}
\email{asanchez@math.tamu.edu}
\urladdr{\url{http://www.math.tamu.edu/~asanchez}}
\author[Ruffo]{James Ruffo}
\address{James Ruffo \\
         Department of Mathematics, Computer Science, \& Statistics\\
         State University of New York\\
         College at Oneonta\\
         Oneonta, NY 13820\\
         USA}
\email{ruffojv@oneonta.edu}
\urladdr{\url{http://employees.oneonta.edu/ruffojv}}
\author[Sottile]{Frank Sottile}
\address{Frank Sottile \\
         Department of Mathematics\\
         Texas A\&M University\\
         College Station\\
         Texas \ 77843\\
         USA}
\email{sottile@math.tamu.edu}
\urladdr{\url{http://www.math.tamu.edu/~sottile}}
\author[Teitler]{Zach Teitler}
\address{Zach Teitler \\
         Department of Mathematics\\
         Boise State University\\
         Boise\\
         Idaho \ 83725\\
         USA}
\email{zteitler@boisestate.edu}
\urladdr{\url{http://math.boisestate.edu/~zteitler}}

\thanks{Research of Sottile supported in part by NSF grant DMS-070105 and DMS-1001615}
\thanks{Research of Hillar  supported in part by an NSF Postdoctoral Fellowship 
        and an NSA Young Investigator grant}
\thanks{This research conducted in part on computers provided by NSF SCREMS grant DMS-0922866}
\subjclass{14M25, 14P99}

\begin{document}

\begin{abstract}
 We formulate the \DeCo{Secant Conjecture}, which is a generalization of the Shapiro
 Conjecture for Grassmannians.
 It asserts that an intersection of Schubert varieties in a Grassmannian is transverse
 with all points real if the flags defining the Schubert varieties are secant along
 disjoint intervals of a rational normal curve.
 We present theoretical evidence for this conjecture as well as computational evidence obtained
 in over one terahertz-year of computing, and we discuss some
 of the phenomena we observed in our data. 
\end{abstract}

\maketitle

%
\section{Introduction}
Some solutions to a system of real polynomial equations are real and the rest
occur in complex conjugate pairs.
While the total number of solutions is determined by the structure of the equations,
the number of real solutions depends rather subtly on the coefficients.
Sometimes there is finer information available in terms of upper bounds~\cite{Kh91,BBS} or
lower bounds~\cite{EG01,SS} on the number of real solutions.
The Shapiro and Secant Conjectures assert the extreme situation of having only real solutions.

The Shapiro Conjecture for Grassmannians posits that if the Wronskian of a vector space of
univariate  
{\sl complex} polynomials has only real roots, then that space is spanned by 
{\sl real} polynomials.
This striking instance of unexpected reality was proven by Eremenko and Gabrielov for
two-dimensional spaces of polynomials~\cite{EG_02,EG10}, 
and the general case was established by Mukhin, Tarasov, and
Varchenko~\cite{MTV_Annals,MTV_JAMS}.
While the statement concerns spaces of polynomials, or more generally the Schubert calculus on
Grassmannians, its proofs  complex
analysis~\cite{EG_02,EG10} and mathematical physics~\cite{MTV_Annals,MTV_JAMS}.
This story was described in the AMS Bulletin~\cite{So_FRSC}.

The Shapiro conjecture first gained attention through partial results and
computations~\cite{So_Shap,Ve}, and further work~\cite{So_fulton} led to 
an extension that appears to hold for flag manifolds, the Monotone Conjecture.
This extension was made in~\cite{RSSS}, which also reported on partial results and 
experimental evidence.
The Monotone Conjecture for a certain family of two-step flag manifolds was proved by
Eremenko, Gabrielov, Shapiro, and Vainshtein~\cite{EGSV}.

The result of~\cite{EGSV} was in fact a proof of reality in the Grassmannian of
codimension-two planes for intersections of Schubert varieties defined with respect to
certain {\sl disjoint secant flags}.
The \demph{Secant Conjecture} postulates an
extension of this result to all Grassmannians.
We give the simplest open instance of the Secant Conjecture.
Let $x_1,\dotsc,x_6$ be indeterminates and consider the polynomial
 \begin{equation}\label{Eq:G25}
   f(s,t,u;x)\ :=\ \det\left(\begin{matrix}
                     1&0&x_1&x_2&x_3\\ 
                     0&1&x_4&x_5&x_6\vspace{3pt}\\
                     1&s&s^2&s^3&s^4\\
                     1&t&t^2&t^3&t^4\\
                     1&u&u^2&u^3&u^4
                   \end{matrix}\right) ,
 \end{equation}
which depends upon parameters $s$, $t$, and $u$.

\begin{conj}\label{C:first}
  Let $s_1<t_1<u_1\,<\, s_2< t_2< \dotsb< u_5\,<\, s_6<t_6<u_6$ be real numbers.
  Then the system of polynomial equations
 \begin{equation}\label{Eq:polysys}
   f(s_i,t_i,u_i; x)\ =\ 0\qquad i=1,\dotsc,6
 \end{equation}
  has five distinct solutions, and all of them are real.
\end{conj}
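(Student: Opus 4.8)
The plan is to recast the system \eqref{Eq:polysys} as a Schubert intersection problem and then to prove reality by degenerating the secant flags to osculating flags, where the Shapiro Conjecture (the theorem of Mukhin, Tarasov, and Varchenko) already gives the conclusion. First I would reinterpret the data geometrically. The row span of the matrix in \eqref{Eq:G25} is a $2$-plane $V=V(x)\subset\C^5$, so $x$ ranges over a dense affine chart of $G(2,5)$, which has dimension $6$. Writing $\gamma(r)=(1,r,r^2,r^3,r^4)$ for the rational normal curve and $W_i=\Span\{\gamma(s_i),\gamma(t_i),\gamma(u_i)\}$ for the $3$-plane secant to $\gamma$ at the three points of the $i$th interval (these are independent by the Vandermonde determinant), the equation $f(s_i,t_i,u_i;x)=0$ holds exactly when $V+W_i\neq\C^5$, i.e.\ when $V\cap W_i\neq 0$: a codimension-$1$ special Schubert condition. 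Thus \eqref{Eq:polysys} asks for the $2$-planes meeting all six secant $3$-planes. Since $\dim G(2,5)=6$ and the degree of $G(2,5)$ in its Pl\"ucker embedding is $5$ (the number of standard Young tableaux of a $2\times 3$ rectangle), a transverse such intersection is exactly $5$ reduced points; establishing transversality over $\R$ is part of the assertion.

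Next I would collapse the three secant points of each interval to a single interior point $z_i$, with $z_1<\dotsb<z_6$ real. As the triple degenerates, the secant $3$-plane $W_i$ tends to the osculating $3$-plane $\Span\{\gamma(z_i),\gamma'(z_i),\gamma''(z_i)\}$, and the condition that $V$ meet this osculating $3$-plane is precisely the vanishing at $z_i$ of the Wronskian $\mathrm{Wr}(V)$. Hence in the limit the system becomes the Shapiro instance for $G(2,5)$ requiring $\mathrm{Wr}(V)$ to have the six prescribed real roots $z_1,\dotsc,z_6$, which by \cite{MTV_Annals,MTV_JAMS} is a transverse intersection with all $5$ points real. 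So the osculating endpoint of the degeneration already enjoys the desired conclusion, and the problem reduces to propagating reality and transversality back along the collapse.

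The core of the argument would then be a homotopy. I would form a one-parameter family of real configurations interpolating between the osculating limit and the given secant data, keeping the three points of each flag strictly inside disjoint real intervals throughout; the space of such configurations (the strict interlacing inequalities) is connected, with the osculating configurations in its closure. Along this path I must show (a) the intersection stays zero-dimensional, so the complex solution count stays $5$ with multiplicity (no excess component forms; the disjointness of the intervals should prevent the secant flags from degenerating relative to one another), and (b) the intersection stays transverse, so that no conjugate pair of complex solutions is born from two real solutions colliding. Since $G(2,5)$ is projective and complex solutions occur in conjugate pairs, (a) and (b) together force all $5$ solutions to remain real all along the path, yielding the Conjecture.

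The hard part will be (b): transversality of the secant intersection at every real disjoint-interval configuration, equivalently the nonvanishing along the homotopy of the relevant discriminant. This is exactly the content of the Secant Conjecture, and I expect it to be the main obstacle. A promising route is the rational-function and oscillation-theoretic method of Eremenko and Gabrielov used in \cite{EGSV} for codimension-two planes: encode $V$ by its associated map to $\P^1$ and translate the six secant conditions into interlacing (total-positivity) conditions whose sign structure forces the critical data, hence all $5$ solutions, to stay real and simple. Extending that analysis from the osculating and single-point-secant setting to genuinely secant flags with three distinct points per interval is precisely the step not yet available; for this smallest case one can at least certify the conclusion computationally (exact elimination or certified homotopy continuation) as a check, but a conceptual proof of (b) is what remains open.
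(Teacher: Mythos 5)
Your proposal does not close the argument, and it cannot: the statement you are asked to prove is Conjecture~\ref{C:first}, which the paper presents as ``the simplest open instance of the Secant Conjecture'' and supports only with computational evidence ($285{,}502$ verified instances), not with a proof. Your geometric translation is correct ($f(s_i,t_i,u_i;x)=0$ iff the $2$-plane $V(x)$ meets the secant $3$-plane $F_3(s_i,t_i,u_i)$, and the expected count is $\I^6=5$ on $G(2,5)$), and your degeneration-plus-homotopy strategy is essentially the paper's own theoretical justification in Section~\ref{SSSec:Shapiro}: collapsing each triple of secant points to a single point recovers the osculating situation of Theorem~\ref{Th:MTV}, which yields Theorem~\ref{Th:limit} (reality and transversality for secant flags along sufficiently short intervals) and thereby ``reduces the Secant Conjecture to its transversality statement.'' The genuine gap is exactly your step (b): transversality of the intersection at \emph{every} disjoint-interval configuration, not just those near the osculating locus. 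Without it, nothing prevents two real solutions from colliding and emerging as a conjugate pair as you deform from short intervals to the given ones, and no argument in the paper or in the literature it cites establishes this for $G(2,5)$.

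One further caution about your proposed route through~\cite{EGSV}: that result proves the Secant Conjecture for $G(n{-}2,n)$, and while $G(2,5)\simeq G(3,5)=G(n{-}2,n)$ under Grassmann duality, the paper's Section~\ref{Sec:Duality} points out that secancy is \emph{not} preserved by duality --- the dual of a secant flag is a cosecant flag, cut out by osculating hyperplanes. So the theorem of Eremenko et al.\ gives the \emph{Cosecant} Conjecture for $G(2,5)$, not Conjecture~\ref{C:first}, and extending their oscillation-theoretic method to genuinely secant flags in this setting is, as you yourself note, precisely the step that is not available. Your write-up is therefore an accurate analysis of why the conjecture is plausible and of where the difficulty lies, but it is not a proof, and you should present it as such.
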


Geometrically, the equation $f(s,t,u;x)=0$ says that the 2-plane (spanned by the first two
rows of the matrix in~\eqref{Eq:G25}) meets the 3-plane which is secant to the
rational curve $\gamma\colon y\mapsto (1,y,y^2,y^3,y^4)$ at the points
$\gamma(s),\gamma(t),\gamma(u)$.
The hypotheses imply that each of the six 3-planes is secant to $\gamma$ along
an interval $[s_i,u_i]$, and these six intervals are pairwise disjoint. 
The conjecture asserts that all of the 2-planes meeting six 3-planes are real when the
3-planes are secant to the rational normal curve along disjoint intervals.
This statement was true in each of the 285,502 instances we tested.

The purpose of this paper is to explain the Secant Conjecture and its relation to the other
reality conjectures, to describe the data supporting it from a large computational experiment,
and to highlight some other features in our data beyond the Secant Conjecture.
These data may be viewed online~\cite{WWW_secant_Exp}.
We will assume some background on the Shapiro Conjecture as described in the
survey~\cite{So_FRSC} and paper~\cite{RSSS}, and we will not describe the execution
of the experiment, as the methods paper~\cite{Exp-FRSC} presented the software 
framework we have developed for such distributed computational experiments.\smallskip

This paper is organized as follows.
In Section~\ref{Sec:Shapiro}, we present the full 
Secant Conjecture, giving a history of its formulation.
Section~\ref{Sec:SpecialCases} presents some theoretical justification for the Secant
Conjecture as well as a generalization based on limiting cases.
In Section~\ref{S:chords} we analyze the problem of lines meeting all possible
configurations of four secant lines, giving conditions on the secant lines that imply
that both solutions are real.
Section~\ref{S:overlap} describes a statistic, the \demph{overlap number},
which measures the extent of overlap among intervals of secancy.
In Section~\ref{Sec:Exp} we explain the data from our experiment.
About $3/4$ of our over $2$ billion computations did not directly test the Secant
Conjecture, but rather tested geometric configurations that were close to those of the
conjecture. 
Consequently, our data contain much more information than that in support of the Secant
	    Conjecture, and we explore that information in the remaining sections.
Section~\ref{Sec:inner} discusses the lower bounds on the numbers of real solutions we
typically observed for small overlap number, producing a striking \demph{inner border}
in the tabulation of our data.
Finally, in Section~\ref{Sec:gaps}, we discuss Schubert problems with provable lower
bounds and gaps in their numbers of real solutions, a phenomenon we first noticed while
trying to understand our data.

We thank Brian Osserman and the referee for their comments on earlier versions of
this paper.

%
\section{Schubert Calculus and the Secant Conjecture}\label{Sec:Shapiro}

We give background from the Schubert Calculus  necessary to state the Secant Conjecture,
and then we state the equivalent dual Cosecant Conjecture.

%
\subsection{Schubert Calculus}

The Schubert Calculus~\cite{Fu97,FuPr} involves problems of determining the
linear spaces that have specified positions with respect to
other, fixed (flags of) linear spaces.
For example, what are the 3-planes in $\C^7$ meeting 12 given 4-planes non-trivially? 
(There are 462~\cite{Sch1886c}.)
The specified positions are a \demph{Schubert problem}, which determines the number of
solutions. 
The actual solutions depend upon the linear spaces imposing the conditions, or 
\demph{instance} of the Schubert problem.

The \demph{Grassmannian $G(k,n)$} is the set of all $k$-dimensional linear subspaces of
$\C^n$, which is an algebraic manifold of dimension $k(n{-}k)$.
A \demph{flag $\Fdot$} is a sequence of linear subspaces
\[
   \Fdot\ \colon\ F_1\subset F_2\subset\dotsb\subset F_n\,,
\]
where $\dim F_i=i$.
A \demph{partition} $\lambda\colon (n{-}k)\geq \lambda_1\geq\dotsb\geq \lambda_k\geq 0$ is
a weakly decreasing sequence of integers.
A fixed flag $\Fdot$ and a partition $\lambda$ define a 
\demph{Schubert variety $X_\lambda\Fdot$},
\[
   X_\lambda\Fdot\ :=\ 
    \{H\in G(k,n)\mid \dim H\cap F_{n-k+i-\lambda_i}\geq i\quad\mbox{for}\  i=1,\dotsc,k\}\,,
\]
which is a subvariety of codimension 
$|\lambda|:=\lambda_1+\dotsb+\lambda_k$. 
Not every element of the flag is needed to define the Schubert variety.

A \demph{Schubert problem} is a list $\lambda^1,\dotsc,\lambda^m$ of partitions
with $|\lambda^1|+\dotsb+|\lambda^m|=k(n-k)$.
For sufficiently general flags $\Fdot^1,\dotsc,\Fdot^m$, the intersection
\[
   X_{\lambda^1}\Fdot^1\cap X_{\lambda^2}\Fdot^2\cap
   \dotsb\cap X_{\lambda^m}\Fdot^m
\]
is transverse~\cite{Kleiman} and consists of a certain number, 
$d(\lambda^1,\dotsc,\lambda^m)$, of 
points, which may be computed using algorithms in the Schubert Calculus
(see~\cite{Fu97,KL72}).  
(\demph{Transverse} means that at each point of the intersection, the annihilators of the tangent
spaces to the Schubert varieties are in direct sum.)
We write a Schubert problem multiplicatively, 
$\lambda^1\dotsb\lambda^m=d(\lambda^1,\dotsc,\lambda^m)$.
For example, writing $\I$ for the partition $(1,0)$ with $|\I|=1$, we have 
$\I\cdot\I\cdot\I\cdot\I\cdot\I\cdot\I=\I^6=5$ for the Schubert problem on $G(2,5)$ involving
six partitions, each equal to $\I$.
In this notation, Schubert's problem that we mentioned above is $\I^{12}=462$ on $G(3,7)$.

A rational normal curve $\gamma\colon\R\to\R^n$ is affinely equivalent to the
\demph{moment curve}
 \[ 
    \gamma\ \colon\ t\ \longmapsto\ (1, t,\, t^2,\, \dotsc,\, t^{n-1})\,.
 \] 
The \demph{osculating flag $\Fdot(t)$} has $i$-dimensional subspace the 
span of the first $i$ derivatives $\gamma(t),\gamma'(t),\dotsc,\gamma^{(i-1)}(t)$ of
$\gamma$ at $t$.
We state the Theorem of Mukhin, et al.~\cite{MTV_Annals,MTV_JAMS}.

\begin{theorem}[The Shapiro Conjecture]\label{Th:MTV}
  For any Schubert problem $\lambda^1,\dotsc,\lambda^m$ on a Grassmannian $G(k,n)$ and any
  distinct real numbers $t_1,\dotsc,t_m$, the intersection
 \[
   X_{\lambda^1}\Fdot(t_1)\cap X_{\lambda^2}\Fdot(t_2)\cap
   \dotsb\cap X_{\lambda^m}\Fdot(t_m)
 \]
  is transverse and consists of $d(\lambda^1,\dotsc,\lambda^m)$ real points.
\end{theorem}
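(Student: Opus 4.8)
The plan is to follow the strategy of Mukhin, Tarasov, and Varchenko, which routes the problem through the representation theory of $\mathfrak{gl}_n$ and the Gaudin integrable system rather than attacking the Schubert intersection directly. First I would translate the statement into the language of spaces of polynomials: a point $H\in G(k,n)$ is a $k$-dimensional space of polynomials of degree at most $n{-}1$, and the osculating condition $H\in X_\lambda\Fdot(t)$ becomes the statement that the Wronskian $\W(H)$ ramifies at $t$ according to $\lambda$, i.e.\ it has a root at $t$ of multiplicity $|\lambda|$ with a prescribed exponent pattern. In these terms a solution of the intersection is a space of polynomials with prescribed ramification at the real points $t_1,\dotsc,t_m$, and reality of a solution means that the space is spanned by real polynomials.

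Next I would set up the Bethe ansatz for the Gaudin model. To each partition $\lambda^j$ one associates a highest-weight $\mathfrak{gl}_n$-module $V_{\lambda^j}$, and one works inside a suitable weight subspace $V$ of the tensor product $V_{\lambda^1}\otimes\dotsb\otimes V_{\lambda^m}$, whose dimension equals $d(\lambda^1,\dotsc,\lambda^m)$. On $V$ acts the \emph{Bethe algebra} $\mathcal{B}$, the commutative algebra generated by the higher Gaudin Hamiltonians built from the parameters $t_1,\dotsc,t_m$ (the coefficients of the universal transfer matrix). The central correspondence to establish is that the action of $\mathcal{B}$ on $V$ is, scheme-theoretically, the Schubert intersection: common eigenvectors of $\mathcal{B}$ (Bethe vectors) correspond to spaces of polynomials solving the Schubert problem, with $\mathcal{B}$ acting on $V$ as the coordinate ring of the intersection scheme. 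Transversality then becomes the assertion that $\mathcal{B}$ acts on $V$ with simple (multiplicity-free) joint spectrum, equivalently that $\mathcal{B}$ is a maximal commutative, diagonalizable subalgebra of $\mathrm{End}(V)$.

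Reality is then extracted by a positivity argument. When the parameters $t_1,\dotsc,t_m$ are real, the Gaudin Hamiltonians, and hence all of $\mathcal{B}$, are self-adjoint with respect to the Shapovalov form on $V$, which is a positive-definite real symmetric form on this tensor product of finite-dimensional modules. A commuting family of self-adjoint operators on a space carrying a positive-definite form is simultaneously diagonalizable with real eigenvalues; combined with the simple-spectrum statement above, this forces every point of the intersection to be real and reduced. That is precisely transversality together with the real count $d(\lambda^1,\dotsc,\lambda^m)$.

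The hard part will be the two structural facts about the Bethe algebra: that the Bethe ansatz is \emph{complete}, so that Bethe vectors span all of $V$, and that $\mathcal{B}$ has simple spectrum and is exactly the coordinate ring of the relevant fiber of the Wronski map. Completeness and the maximality of $\mathcal{B}$ are the deep inputs, resting on a delicate analysis of the Gaudin system and its identification with commutative subalgebras arising from $\mathfrak{gl}_n$; this is where the full force of the mathematical-physics machinery is needed, and it is not soft. Once these are in place, the self-adjointness step is comparatively routine. (In the special case $k=2$ one may instead follow Eremenko and Gabrielov, analyzing the monodromy of the Wronski map as a real covering of spaces of rational functions; this avoids the Gaudin model but does not reach general $k$.)
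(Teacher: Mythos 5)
The paper does not prove Theorem~\ref{Th:MTV}: it is imported verbatim as the theorem of Mukhin, Tarasov, and Varchenko, cited to \cite{MTV_Annals,MTV_JAMS}, so there is no internal argument to compare yours against. That said, your outline is an accurate summary of the route those papers actually take: reality comes from the self-adjointness of the (higher) Gaudin Hamiltonians with respect to the positive-definite tensor Shapovalov form when the parameters $t_1,\dotsc,t_m$ are real, as in \cite{MTV_Annals}, and transversality comes from the scheme-theoretic identification of the Bethe algebra with the coordinate ring of the intersection (equivalently of the fiber of the Wronski map) together with simplicity of its spectrum, as in \cite{MTV_JAMS}. Two caveats. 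First, what you have written is a reduction, not a proof: the two ``structural facts'' you defer---completeness of the Bethe ansatz / maximality and diagonalizability of the Bethe algebra, and its identification with the coordinate ring of the Schubert intersection---are essentially the entire content of the theorem, so a referee could not verify anything here without reproducing the two source papers; you should present this explicitly as a proof outline with references rather than as a proof. Second, a small but real imprecision: for the Schubert problem on $G(k,n)$, realized as $k$-dimensional spaces of polynomials of degree at most $n{-}1$, the relevant current/Gaudin algebra is built from $\mathfrak{gl}_k$ (or $\mathfrak{gl}_{n-k}$ after Grassmann duality), not $\mathfrak{gl}_n$; the weight subspace whose dimension is $d(\lambda^1,\dotsc,\lambda^m)$ is the space of singular vectors of the appropriate weight in $V_{\lambda^1}\otimes\dotsb\otimes V_{\lambda^m}$. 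You might also note, as this paper does in the proof of its Corollary in Section~\ref{SSSec:MTV}, that once reality is known for an open set of real parameters, transversality over those parameters can alternatively be deduced by the softer Eremenko--Gabrielov argument that a finite analytic map with all-real fibers over an open set of real points is unramified there \cite{IHP}, which bypasses the simple-spectrum statement for generic parameters (though not for all distinct $t_i$).
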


Transversality is unexpected as osculating flags are not general.

The Shapiro Conjecture concerns intersections of Schubert varieties given
by flags osculating a rational normal curve, and in this form it makes sense for every
flag manifold $G/P$.
Purbhoo showed that it holds for  the orthogonal Grassmannians~\cite{Purbhoo_OG},
but counterexamples are known for other flag manifolds.
There is an appealing version of it---the Monotone Conjecture---that appears to hold for
the classical flag variety~\cite{RSSS}.

%
\subsection{The Secant Conjecture}

Eremenko, et al.~\cite{EGSV} proved a generalization of the Monotone Conjecture for
flags consisting of a codimension-two plane lying on a hyperplane, where 
it becomes a statement about real rational functions.
Their theorem asserts that a Schubert problem on $G(n{-}2,n)$ has only real
solutions if the flags satisfy a special property that we now describe.
A flag $\Fdot$ of linear subspaces is 
\demph{secant along an interval $I$} of a
rational normal curve $\gamma$ if every subspace in the flag is spanned by its
intersection with $I$.
This means that there are distinct points 
$t_1,\dotsc,t_{n-1}\in I$ such that for each $i=1,\dotsc,n{-}1$, the subspace $F_i$ of the
flag $\Fdot$ is spanned by $\gamma(t_1),\dotsc,\gamma(t_i)$.

\begin{sconj}\label{SecConj}
  For any Schubert problem $\lambda^1,\dotsc,\lambda^m$ on a Grassmannian $G(k,n)$ and any
  flags $\Fdot^1,\dotsc,\Fdot^m$ that are secant to a rational normal curve $\gamma$ along
  disjoint intervals, the intersection
\[
   X_{\lambda^1}\Fdot^1\cap X_{\lambda^2}\Fdot^2\cap
   \dotsb\cap X_{\lambda^m}\Fdot^m
\]
 is transverse and consists of $d(\lambda^1,\dotsc,\lambda^m)$ real points.
\end{sconj}

Conjecture~\ref{C:first} is the case of this Secant Conjecture for
the Schubert problem $\I^6=5$ on $G(2,5)$.
The Schubert variety $X_{\sI}\Fdot$ is 
\[
   X_{\sI}\Fdot\ =\ \{H\in G(2,5)\mid   \dim H\cap F_3\geq 1\}\,;
\]
that is, the set of 2-planes meeting a fixed 3-plane non-trivially.
Since $F_4$ and $F_5$ are irrelevant we drop them from the flag and refer to $F_3$ and
$X_{\sI}F_3$. 
For every Schubert condition, there is a largest element of the flag imposing a relevant
condition; call this the \demph{relevant subspace}. 
The relevant subspace in this example is $F_3$.

For $s,t,u\in\R$, let $F_3(s,t,u)$ be the linear span of 
$\gamma(s)$, $\gamma(t)$, and $\gamma(u)$, a 3-plane secant to $\gamma$ 
with points $\gamma(s)$, $\gamma(t)$, and $\gamma(u)$ of secancy.
Thus, the condition $f(s,t,u;x)=0$ of Conjecture~\ref{C:first} implies that the linear span $H$ 
of the first two rows of the matrix in~\eqref{Eq:G25}---a general 2-plane in
5-space---meets the linear span $F_3(s,t,u)$ of the last three rows.
Thus
\[
    f(s,t,u;x)\ =\ 0
   \quad\Longleftrightarrow\quad
    H\in X_{\sI}F_3(s,t,u)\,.
\]
Lastly, the condition on the ordering of the points $s_i,t_i,u_i$ in
Conjecture~\ref{C:first} implies that the six 
flags $F_3(s_i,t_i,u_i)$ are secant along disjoint intervals.

%
%
\subsection{Grassmann Duality and the Cosecant Conjecture}
\label{Sec:Duality}

Associating a linear subspace $H$ of a vector space $V\simeq \C^n$ to its annihilator
$\delta(H):=H^\perp\subset V^*$ induces an isomorphism $\delta\colon G(k,n)\to G(n{-}k,n)$
called \demph{Grassmann duality}.
This notion extends to flags and the dual of an osculating flag is an osculating flag.
Secancy is not preserved under duality.
We next formulate the (equivalent) dual statement to the Secant Conjecture, which we call the 
Cosecant Conjecture.

Grassmann duality respects Schubert varieties.
Given a flag $\Fdot\subset\C^n$, let $\Fdot^\perp$ be the flag whose $i$-dimensional
subspace is $F^\perp_i:=(F_{n-i})^\perp$.
Then 
\[
   \delta(X_\lambda\Fdot)\ =\ X_{\lambda^T}\Fdot^\perp\,,
\]
where $\lambda^T$ is the \demph{conjugate partition} to $\lambda$.
For example,
\[
   \Tw^T\ =\ \raisebox{-6pt}{\II}\,,
   \qquad
   \raisebox{-6pt}{$\bTI^T$}\ =\ \raisebox{-6pt}{$\bTI$}\,,
   \qquad\mbox{and}\qquad
   \raisebox{-6pt}{$\ThI^T$}\ =\ \raisebox{-13pt}{$\TII$}\,.
\]
That is, if we represent $\lambda$ by its \demph{Young diagram}---a left-justified array of boxes
with $\lambda_i$ boxes in row $i$---then the diagram of $\lambda^T$ is the matrix-transpose of
the diagram of $\lambda$.

If $\gamma(t)=(1,t,t^2,\dotsc,t^{n-1})$ is the rational normal curve, then the dual of the
family $F_{n-1}(t)$ of its osculating $(n{-}1)$-planes is a curve
$\gamma^\perp(t):=(F_{n-1}(t))^\perp$, which is
\[
   \gamma^\perp(t)\ =\ 
   \bigl(\tbinom{n-1}{n-1}(-t)^{n-1}\,,\, \dotsc\,,\,
          -{\tbinom{n-1}{3}t^3}\,,\, \tbinom{n-1}{2}t^2\,,\,
          -{(n{-}1)t}\,,\,1\,\bigr)\,,
\]
in the basis dual to the standard basis.
Moreover, $(F_{n-k}(t))^\perp$ is the osculating $k$-plane to this dual rational normal curve
$\gamma^\perp$ at the point $\gamma^\perp(t)$.
Thus Grassmann duality preserves Schubert varieties given by flags osculating the rational
normal curve, and 
the dual statement to Theorem~\ref{Th:MTV} is simply itself.

This is however not the case for secant flags.
The general secant $(n{-}1)$-plane
\[
   F_{n-1}(s_1,s_2,\dotsc,s_{n-1}) =\ 
   \Span\{ \gamma(s_1)\,,\,\gamma(s_2)\,,\,\dotsc\,,\,\gamma(s_{n-1})\}\,,
\]
secant to $\gamma$ at the points $\gamma(s_1),\dotsc,\gamma(s_{n-1})$, has dual space 
spanned by the vector 
\[
   \bigl( (-1)^{n-1}e_{n-1}\,,\, \dotsc\,,\,
    -{e_3}\,,\, e_2\,,\, -{e_1}\,,\, 1\bigr)\,,
\]
where $e_i$ is the $i$th elementary symmetric function in the parameters $s_1,\dotsc,s_{n-1}$.
This dual space is not secant to the dual rational normal curve $\gamma^\perp$.

In general, a \demph{cosecant subspace} is a subspace that is dual to a secant subspace.
If 
\[
   F_k(s_1,s_2,\dotsc,s_k)\ =\ 
    \Span\{\gamma(s_1)\,,\,\gamma(s_2)\,,\,\dotsc\,,\,\gamma(s_k)\}\,, 
\]
then the corresponding cosecant subspace is
\[
   F_{n-1}^\perp(s_1)\,\cap\,
   F_{n-1}^\perp(s_2)\,\cap\, \dotsb\,\cap\,
   F_{n-1}^\perp(s_k)\,,
\]
the intersection of $k$ hyperplanes osculating the rational normal curve $\gamma^\perp$. 
A \demph{cosecant flag} is a flag whose subspaces are cut out by hyperplanes osculating
$\gamma$.
It is \demph{cosecant along an interval} of $\gamma$ if these hyperplanes osculate $\gamma$ at points
of the interval. 

Thus, under Grassmann duality the Secant Conjecture for
$G(n{-}k,n)$ becomes the following equivalent \demph{Cosecant Conjecture} for $G(k,n)$.

\begin{conj}[Cosecant Conjecture]
 For any Schubert problem $\lambda^1,\dotsc,\lambda^m$ on a Grassmannian $G(k,n)$ and any flags
 $\Fdot^1,\dotsc,\Fdot^m$ that are cosecant to a rational normal curve $\gamma$ along disjoint
 intervals, the intersection 
\[
   X_{\lambda^1}\Fdot^1\,\cap\,
   X_{\lambda^2}\Fdot^2\,\cap\,\dotsb\,\cap\, X_{\lambda^m}\Fdot^m
\]
 is transverse and consists of $d(\lambda^1,\dotsc,\lambda^m)$ real points.
\end{conj}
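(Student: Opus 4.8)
The plan is to establish the reality and transversality assertions by continuity from the osculating limit, where Theorem~\ref{Th:MTV} applies, and then to propagate reality across the entire space of disjoint-interval cosecant configurations. Fix the Schubert problem $\lambda^1,\dots,\lambda^m$ and view each cosecant flag $\Fdot^j$ as determined by the osculating hyperplanes that cut out its subspaces at the cosecancy points in the $j$th interval. As an interval collapses to a point $t_j$, these hyperplanes acquire higher-order contact with $\gamma$ at $t_j$, so $\Fdot^j$ degenerates to the osculating flag $\Fdot(t_j)$. Choosing all $m$ intervals short and disjoint about distinct reals $t_1,\dots,t_m$ thus recovers in the limit the intersection $X_{\lambda^1}\Fdot(t_1)\cap\dots\cap X_{\lambda^m}\Fdot(t_m)$, which by Theorem~\ref{Th:MTV} is transverse and consists of $d(\lambda^1,\dots,\lambda^m)$ real points.

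I would then obtain the conjecture for sufficiently short intervals by continuity. Transversality is an open condition, so it persists on a neighborhood of the osculating limit; and a transverse, hence reduced, zero-dimensional intersection of $d(\lambda^1,\dots,\lambda^m)$ real points varies continuously, each point remaining real under small real perturbation of the defining flags. Hence there is an explicit neighborhood of the collapsed configuration, inside the real parameter space of disjoint-interval cosecant flags, on which the intersection is transverse and totally real. This is the cosecant analogue of how the $G(n{-}2,n)$ theorem of Eremenko et al.~\cite{EGSV} specializes to the Shapiro setting.

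The essential remaining step is global. The parameter space of disjoint-interval cosecant configurations is connected, since one may slide and lengthen the intervals while keeping them pairwise disjoint, and along any path within it the number of real solutions is locally constant wherever the intersection stays transverse. The whole conjecture therefore reduces to one assertion: the intersection is transverse for \emph{every} disjoint-interval cosecant configuration, i.e.\ the discriminant of non-transverse or positive-dimensional intersections never meets the disjoint-interval stratum. This transversality-everywhere statement is the crux, and it is exactly what fails once the intervals are allowed to overlap, so any proof must use disjointness in an essential way; no elementary continuity argument reaches it, which is why the conjecture remains open.

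The realistic route to the global step is to adapt the integrable-systems machinery behind Theorem~\ref{Th:MTV}. Mukhin, Tarasov, and Varchenko~\cite{MTV_Annals,MTV_JAMS} identify the osculating intersection with the spectrum of a commutative Bethe (higher Gaudin) algebra that is self-adjoint with respect to a positive-definite Shapovalov form, forcing real spectrum and hence real flags. For cosecant flags the natural counterpart is the transfer matrices of the rational $\mathfrak{gl}_n$ spin chain with evaluation points at the cosecancy parameters, osculation being their confluent limit. The main obstacle is to produce a positive-definite Hermitian form making these transfer matrices self-adjoint precisely when the evaluation points lie in disjoint real intervals, together with the corresponding Bethe-ansatz completeness; no such positivity statement is known in the secant regime. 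A parallel combinatorial attack, degenerating the cosecant problem in the style of Purbhoo's jeu-de-taquin argument for the orthogonal Grassmannian~\cite{Purbhoo_OG} while tracking real solutions, confronts the same difficulty of certifying that reality survives each degeneration.
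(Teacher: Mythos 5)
You have not proved the Cosecant Conjecture, and neither does the paper: this statement is an open conjecture, and the paper's entire treatment of it consists of the Grassmann duality computation in Section~\ref{Sec:Duality} showing it is \emph{equivalent} to the Secant Conjecture~\ref{SecConj} --- the dual of a secant flag is a cosecant flag, $\delta(X_\lambda\Fdot)=X_{\lambda^T}\Fdot^\perp$, and duality carries flags secant along disjoint intervals of $\gamma$ to flags cosecant along disjoint intervals. Your proposal never invokes this equivalence, which is the one genuine piece of content the paper attaches to the statement; instead you work directly on the cosecant side. What you do establish is correct but is only the confluent-limit case: as the points of osculation of the cutting hyperplanes collapse to $t_j$, the cosecant flag does degenerate to the osculating flag $\Fdot(t_j)$ (this is the dual of the paper's Lemma in Section~\ref{SSSec:Shapiro}, using that duals of osculating flags are osculating), so Theorem~\ref{Th:MTV} plus openness of transversality and conjugation-invariance yields the cosecant analogue of Theorem~\ref{Th:limit} --- a statement the paper explicitly mentions (``a corresponding version of Theorem~\ref{Th:limit}, which we do not formulate''). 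Likewise your reduction of the full conjecture to its transversality assertion mirrors the paper's own remark following Theorem~\ref{Th:limit}. So up to that point you have independently reconstructed the paper's partial evidence, not a proof.

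The gap is exactly where you locate it, and your candor does not close it: the claim that the discriminant never meets the disjoint-interval stratum is the entire open content of the conjecture, known only for $G(n{-}2,n)$ by \cite{EGSV} and for arithmetic-progression secancy via \cite{MTV_XXX}, and your final paragraph (a positive Hermitian form making the spin-chain transfer matrices self-adjoint in the secant regime, or a Purbhoo-style degeneration) is a research program with its central positivity statement admitted to be unknown. Two smaller technical slips in the continuity step: the $\epsilon$ in Theorem~\ref{Th:limit} depends on the chosen points $t_1,\dotsc,t_m$, so you get no uniform neighborhood of the osculating locus, only reality near each collapsed configuration; and the space of $m$ labeled disjoint intervals on $\gamma\simeq\R\P^1$ is \emph{not} connected --- its components are indexed by the cyclic order of the intervals, since intervals cannot pass through one another while remaining disjoint. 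The latter is repairable (each component contains confluent limits in its closure, so reality is available as a basepoint in every component), but as written the ``slide and lengthen'' connectedness claim is false, and in any case the monodromy argument it feeds requires transversality along entire paths, which is the unproven crux.
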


%
\section{Some special cases of the Secant Conjecture}\label{Sec:SpecialCases}

A degree of justification for posing the Secant Conjecture
is provided by the history of its development from the Shapiro and Monotone Conjectures,
as this shows its connection to
proven results and established conjectures, and its validity for $G(n{-}2,n)$~\cite{EGSV}.
Here, we give more concrete justifications, which include proofs in some special cases.

%
\subsection{Arithmetic progressions of secancy}\label{SSSec:MTV}
Fix a parametrization $\gamma\colon\R\to\R^n$ of a rational normal curve.
For $t\in\R$ and $h>0$, let $\Fdot^h(t)$ be the flag whose $i$-dimensional subspace is
\[
   F_i^h(t)\ :=\ 
    \Span\{\gamma(t),\, \gamma(t{+}h),\, \dotsc,\,\gamma(t{+}(i{-}1)h)\}\,,
\]
which is spanned by an arithmetic progression of length $i$ 
with step size $h$.
Work of Mukhin, et al.~\cite{MTV_XXX} implies the Secant Conjecture for the Schubert
problem
 \begin{equation}\label{Eq:Schubert_number}
   \I^{k(n-k)}\ =\ [k(n{-}k)]!\frac{1!2!\dotsb(k{-}1)!}{(n{-}k)!\dotsb(n{-}2)!(n{-}1)!}
 \end{equation}
for such secant flags.

Let $\C_{n-1}[t]$ be the space of polynomials of degree at most $n{-}1$.
The discrete Wronskian with step size $h$ of polynomials $f_1,\dotsc,f_k$
is the determinant
 \begin{equation}\label{Eq:Wr_det}
   \W_h(f_1,f_2,\dotsc,f_k)\ :=\ \det\left(  \begin{matrix}
           f_1(t) & f_1(t+h) & \dotsb & f_1(t+(k{-}1)h)\\
           f_2(t) & f_2(t+h) & \dotsb & f_2(t+(k{-}1)h)\\
           \vdots& \vdots  & \ddots & \vdots \\
           f_k(t) & f_k(t+h) & \dotsb & f_k(t+(k{-}1)h)\\
    \end{matrix}\right)\ .
 \end{equation}
For general $f_1,\dotsc,f_k\in\C_{n-1}[t]$, this polynomial has degree $k(n{-}k)$.
Up to a scalar, the polynomial $W_h$ depends
only on the linear span of the polynomials $f_1,\dotsc,f_k$,
giving a map
\[
   \W_h\ \colon\ G(k, \C_{n-1}[t])\ \longrightarrow\ \P^{k(n-k)}\,,
\]
where $\P^{k(n-k)}$ is the projective space of polynomials of degree
at most $k(n-k)$.
Mukhin, et al.~\cite{MTV_XXX} show that $\W_h$ is a finite
map.
It is a linear projection of the Grassmannian  in its
Pl\"ucker embedding, so the fiber over a general polynomial
$w(t)\in\P^{k(n-k)}$ consists of  $d(\I^{k(n-k)})$ reduced points,
each of which is a space $V$ of polynomials with discrete Wronskian
$w(t)$.
As a special case of Theorem~2.1 in~\cite{MTV_XXX}, we have the following
statement.

\begin{proposition}\label{MTV}
  Let $V\subset\C_{n-1}[t]$ be a $k$-dimensional space of polynomials
  whose discrete Wronskian $\W_h(V)$ has distinct real roots
  $z_1,\dotsc,z_N$, each of multiplicity $1$.
  If for all $i\neq j$, we have $|z_i-z_j|\geq h$, 
 then the space $V$ has a basis of real polynomials.
\end{proposition}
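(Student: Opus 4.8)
The plan is to recast the statement about the discrete Wronski map $\W_h$ as a reality statement for the eigenvectors of a commuting family of operators, and then to deduce reality from self-adjointness with respect to a positive-definite Hermitian form—the strategy Mukhin, Tarasov, and Varchenko use for the continuous Wronskian underlying Theorem~\ref{Th:MTV}. First I would note that a $k$-dimensional space $V\subset\C_{n-1}[t]$ with simple discrete Wronskian is the space of polynomial solutions of a difference operator $D=\tau^k+a_{k-1}(t)\tau^{k-1}+\dotsb+a_0(t)$, where $\tau\colon f(t)\mapsto f(t+h)$ and the coefficients $a_i$ are ratios of discrete Wronskians of a basis of $V$; the roots $z_1,\dotsc,z_N$ of $\W_h(V)$ are read off from this operator. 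Since $\W_h$ is finite and, by the simplicity of the roots, $w(t):=\prod_i(t-z_i)$ is a regular value, the fiber $\W_h^{-1}(w)$ is reduced with exactly $d(\I^{k(n-k)})$ points (the reducedness being part of the spectral analysis in~\cite{MTV_XXX}). As the nonreal points of this fiber occur in complex-conjugate pairs, it suffices to exhibit $d(\I^{k(n-k)})$ real spaces with discrete Wronskian $w$: this forces every point of the fiber, including the given $V$, to be real, i.e.\ spanned by real polynomials.

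Next I would invoke the Bethe-ansatz correspondence of~\cite{MTV_XXX}: the spaces $V$ with discrete Wronskian $w$ are in bijection with the joint eigenvectors of the transfer matrices (the Bethe algebra) of an inhomogeneous $XXX$ spin chain acting on a prescribed weight subspace of a tensor product of irreducible modules, the step size $h$ and the positions $z_i$ determining the inhomogeneities and the eigenvalues. Under this dictionary, reality of $V$ is equivalent to reality of the corresponding eigenvector, so it is enough to produce a basis of real joint eigenvectors; the $h\to 0$ limit recovers the Gaudin model of the continuous case as a consistency check, with the separation hypothesis degenerating to plain distinctness.

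The crux, and the step I expect to be the main obstacle, is to show that the transfer matrices are self-adjoint with respect to a contravariant (Shapovalov-type) Hermitian form and that this form is positive definite. Self-adjointness forces the spectrum to be real and the joint eigenspaces to be defined over $\R$, while positive-definiteness rules out Jordan blocks and yields a genuine real eigenbasis; together these furnish the $d(\I^{k(n-k)})$ real spaces needed above. Unlike the Gaudin case, positivity in the $XXX$ case is delicate and fails without a separation condition on the spectral data, and this is exactly where the hypothesis $|z_i-z_j|\ge h$ enters: it keeps the pairwise differences out of the regime where the contravariant form degenerates or changes sign. Establishing this positivity is the heart of Theorem~2.1 of~\cite{MTV_XXX}, of which the proposition is a special case, and it is the part I would not attempt to reprove from scratch.
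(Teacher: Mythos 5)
The paper gives no proof of this proposition: it is stated as a special case of Theorem~2.1 of \cite{MTV_XXX}, which is exactly the citation to which you ultimately defer the crux (positivity of the contravariant form under the separation hypothesis $|z_i-z_j|\ge h$). Your surrounding sketch of the Bethe-ansatz strategy of that reference is accurate, so your treatment is essentially the same as the paper's.
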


\begin{cor}
  Set $N:=k(n{-}k)$ and suppose that $\Fdot^h(z_1),\dotsc,\Fdot^h(z_N)$ are disjoint secant
  flags with $z_i+(n{-}1)h< z_{i+1}$ for each $i=1,\dotsc,N-1$.
  Then the intersection 
 \begin{equation}\label{Eq:arith}
   X_{\sI}\Fdot^h(z_1)\,\cap\,
   X_{\sI}\Fdot^h(z_2)\,\cap\,\dotsb\,\cap\,
   X_{\sI}\Fdot^h(z_N)
 \end{equation}
  in $G(n{-}k,n)$ is transverse with all points real.
\end{cor}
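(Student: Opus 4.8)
The plan is to convert the Schubert intersection~\eqref{Eq:arith} into a fiber of the discrete Wronski map $\W_h$ and then quote Proposition~\ref{MTV}. Identify the dual space $(\C^n)^*$ with $\C_{n-1}[t]$ via the basis dual to the standard basis, so that a polynomial $f$ pairs with $\gamma(z)\in\C^n$ by $\langle f,\gamma(z)\rangle=f(z)$. Given a point $H\in G(n{-}k,n)$ of the intersection, set $V:=H^\perp\subset\C_{n-1}[t]$, a $k$-dimensional space of polynomials. First I would prove that for every $z$,
\[
   H\in X_{\sI}\Fdot^h(z)
   \quad\Longleftrightarrow\quad
   H\cap F_k^h(z)\neq\{0\}
   \quad\Longleftrightarrow\quad
   \W_h(V)(z)=0\,.
\]
The first equivalence is the definition of $X_{\sI}$ in $G(n{-}k,n)$, whose only relevant condition is that $H$ meet the $k$-plane $F_k^h(z)$. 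For the second, a nonzero vector $\sum_j c_j\gamma(z{+}jh)$ lies in $H=V^\perp$ exactly when $\sum_j c_j f(z{+}jh)=0$ for every $f\in V$, that is, when the columns of the matrix defining $\W_h(V)$ in~\eqref{Eq:Wr_det} are linearly dependent.

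This makes $H\mapsto H^\perp=V$ a bijection between the intersection~\eqref{Eq:arith} and the fiber $\W_h^{-1}([w])$, where $w(t):=\prod_{i=1}^N(t-z_i)$: indeed $H$ lies in all $N$ Schubert varieties precisely when $\W_h(V)$ vanishes at $z_1,\dotsc,z_N$, and since $\deg\W_h(V)\le N$ while $\W_h$ is base-point free (being a finite morphism on $G(k,\C_{n-1}[t])$), this forces $\W_h(V)=c\,w$ with $c\neq 0$. Next I would check the hypotheses of Proposition~\ref{MTV}: the $z_i$ are distinct and real, and $z_i+(n{-}1)h<z_{i+1}$ gives $z_{i+1}-z_i>(n{-}1)h\ge h$, so $|z_i-z_j|\ge h$ for all $i\neq j$. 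Proposition~\ref{MTV} then shows every $V$ in the fiber has a basis of real polynomials, whence each solution $H=V^\perp$ is a real point of $G(n{-}k,n)$.

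It remains to establish transversality, which I expect to be the crux. Each $X_{\sI}\Fdot^h(z_i)$ is a hyperplane section of $G(n{-}k,n)$ in its Pl\"ucker embedding, since $\W_h(V)(z_i)=0$ is linear in the Pl\"ucker coordinates of $V$; hence~\eqref{Eq:arith} is exactly the linear section cut out by the fiber of the projection $\W_h$, and its transversality is equivalent to reducedness of $\W_h^{-1}([w])$. As $\W_h$ is finite of degree $d(\I^{k(n-k)})$, this fiber has length $d(\I^{k(n-k)})$ with multiplicity, and transversality amounts to the fiber consisting of $d(\I^{k(n-k)})$ distinct points. This reducedness is not contained in the reality statement isolated in Proposition~\ref{MTV}, and supplying it is the main obstacle: it follows from the full strength of Theorem~2.1 of~\cite{MTV_XXX}, which asserts that over a squarefree real polynomial whose roots are separated by at least $h$ the fiber is reduced with all points real. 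Granting this, the fiber is reduced of size $d(\I^{k(n-k)})$, all of whose points are real by the previous paragraph, and the corollary follows.
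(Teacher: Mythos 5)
Your reduction of reality to Proposition~\ref{MTV} is correct and is essentially the argument the paper gives: you identify a point $H$ of the intersection with the $k$-dimensional space $V$ of polynomials it annihilates, observe that $\W_h(V)(z)=0$ exactly when the columns of the matrix in~\eqref{Eq:Wr_det} (which are the images under $V$ of the spanning vectors of $F_k^h(z)$) are dependent, i.e.\ when $\dim H\cap F_k^h(z)\geq 1$, and conclude that the intersection~\eqref{Eq:arith} is the fiber of $\W_h$ over $(t-z_1)\dotsb(t-z_N)$. Your explicit verification that $z_{i+1}-z_i>(n{-}1)h\geq h$ meets the separation hypothesis, and that the degree bound plus the absence of base points forces $\W_h(V)$ to be proportional to $\prod(t-z_i)$, fills in details the paper leaves implicit; this part is fine.

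The gap is in transversality, and you have correctly located it but not closed it: your argument ends with ``granting'' that the full Theorem~2.1 of~\cite{MTV_XXX} asserts reducedness of the fiber. That is a conditional appeal to a statement you have not verified and which is strictly stronger than what the paper extracts as Proposition~\ref{MTV} (which is purely a reality statement). The paper closes this gap by a different and self-contained route, due to Eremenko and Gabrielov (see~\cite[Ch.~13]{IHP}): a finite analytic map between complex manifolds whose fibers over an \emph{open set} of real points consist entirely of real points is unramified over those points. Your own reality argument already supplies the hypothesis, because the separation condition $z_{i+1}-z_i>(n{-}1)h$ is open, so \emph{every} nearby real tuple also has an all-real fiber; unramifiedness of the finite map $\W_h$ at these fibers is then exactly the reducedness (equivalently, the transversality of the $N$ hyperplane sections in the Pl\"ucker embedding) that you need. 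Replacing your citation of the ``full strength'' of Theorem~2.1 with this deduction turns your conditional final step into a proof.
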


\begin{proof}
  We identify points in the intersection~\eqref{Eq:arith} with the fibers of the discrete
  Wronski map 
  $W_h$ over the polynomial $(t-z_1)\dotsb(t-z_{k(n-k)})$, which will prove reality.
  Transversality follows by an argument of Eremenko and Gabrielov given
  in~\cite[Ch.~13]{IHP}: a finite analytic map between complex manifolds that
  has only real points in its fibers above an open set of real points is necessarily
  unramified over those points.

  A polynomial of degree $n{-}1$ is the composition of the
  parametrization $\gamma\colon\C\to\C^n$ of the rational normal curve with a linear form 
  $\C^n\to\C$.
  In this way, a subspace $V$ of polynomials of dimension $k$ corresponds to a surjective
  map $V\colon\C^n\to\C^k$.
  We will identify such a map with its kernel $H$, which is a point in $G(n{-}k,n)$.

  The column space of the matrix in~\eqref{Eq:Wr_det} is the image under $V$ of the linearly
  independent vectors $\gamma(t),\gamma(t+h),\dotsc,\gamma(t+(k{-}1)h)$.
  These vectors span $F_{k}^h(t)$.
  Thus the determinant $W_h(V)$ vanishes at a point $t$ exactly when the map
\[
   V\ \colon\ F_{k}^h(t)\ \longrightarrow\ \C^{k}
\]
  does not have full rank; that is, when 
\[
   \dim H \cap\ F_{k}^h(t)\ \geq\ 1\,,
\]
  which is equivalent to $H\in X_{\sI}\Fdot^h(t)\subset G(n{-}k,n)$.

  It follows that  points in the intersection~\eqref{Eq:arith}
  correspond to $k$-dimensional spaces of polynomials $V$ with discrete Wronskian 
  $(t-z_1)\dotsb(t-z_{k(n-k)})$, and each of these are real, by
  Proposition~\ref{MTV}.
\end{proof}

%
\subsection{The Shapiro Conjecture is the limit of the Secant Conjecture}\label{SSSec:Shapiro}

The osculating plane $F_i(s)$ is the unique $i$-dimensional plane having maximal order of
contact with the rational normal curve $\gamma$ at the point $\gamma(s)$.
This implies that it is a limit of secant planes, and in fact every limit of secant planes in
which the points come together is an osculating plane. 

\begin{lemma}
 Let $\{s_1^{(j)},\dotsc,s_i^{(j)}\}$ for $j=1,2,\dotsc$ be a sequence of lists of $i$
 distinct complex numbers that all converge to the same number, 
 $\lim_{j\to\infty} s_p^{(j)}=s$, for each $p=1,\dotsc,i$ and for some number $s$.
 Then
\[
   \lim_{j\to\infty} \Span\{\gamma(s_1^{(j)}),\gamma(s_2^{(j)}),\dotsc,\gamma(s_i^{(j)})\}\ =\ F_i(s)\,.
\]
\end{lemma}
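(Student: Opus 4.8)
The plan is to realize each span as a point in the Grassmannian $G(i,n)$ via the $i\times n$ matrix $M^{(j)}$ whose rows are $\gamma(s_1^{(j)}),\dots,\gamma(s_i^{(j)})$, and to use the fact that a sequence of $i$-planes converges in $G(i,n)$ exactly when the associated points of the Pl\"ucker embedding converge, i.e.\ when the (projective) vector of maximal $i\times i$ minors converges. The obstacle one immediately meets is that every row $\gamma(s_p^{(j)})$ tends to the single vector $\gamma(s)$, so $M^{(j)}$ degenerates to rank one in the limit and one cannot pass to the limit naively: the ``extra'' directions of the $i$-plane are being encoded in the relative rates at which the colliding rows approach $\gamma(s)$.

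To extract those directions I would apply, for each $j$, an invertible (lower-triangular) row operation, which does not change the row space, replacing the rows by the coordinatewise divided differences of $\gamma$. Writing $v_p^{(j)} := \gamma[s_1^{(j)},\dots,s_p^{(j)}]$ for $p=1,\dots,i$, the passage from $(\gamma(s_1^{(j)}),\dots,\gamma(s_i^{(j)}))$ to $(v_1^{(j)},\dots,v_i^{(j)})$ is invertible and lower-triangular (since $v_p^{(j)}$ is a combination of $\gamma(s_1^{(j)}),\dots,\gamma(s_p^{(j)})$ with nonzero leading coefficient $\prod_{r<p}(s_p^{(j)}-s_r^{(j)})^{-1}$), so for each $j$ the span is preserved: $\Span\{\gamma(s_1^{(j)}),\dots,\gamma(s_i^{(j)})\}$ equals the row space of the matrix $N^{(j)}$ with rows $v_1^{(j)},\dots,v_i^{(j)}$.

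The key computation is then the confluence property of divided differences. Because the coordinates of $\gamma$ are polynomials, each $\gamma[s_1^{(j)},\dots,s_p^{(j)}]$ is a symmetric polynomial in its nodes, hence extends continuously to the diagonal, where its value is $\tfrac{1}{(p-1)!}\gamma^{(p-1)}(s)$. Since $s_1^{(j)},\dots,s_p^{(j)}\to s$ for each $p$, the matrix $N^{(j)}$ converges entrywise to the matrix $N$ whose $p$-th row is $\tfrac{1}{(p-1)!}\gamma^{(p-1)}(s)$, for $p=1,\dots,i$.

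Finally I would verify that $N$ has full rank $i$, so that the limit is a genuine $i$-plane and equals the osculating flag. As $\gamma$ is affinely equivalent to the moment curve, the vectors $\gamma(s),\gamma'(s),\dots,\gamma^{(i-1)}(s)$ are linearly independent for every $s$ (their coordinate matrix has a nonvanishing triangular minor), and by definition their span is $F_i(s)$. The maximal minors are polynomials in the matrix entries, hence continuous, so the Pl\"ucker point of $N^{(j)}$ converges to that of $N$, which is nonzero because $N$ has rank $i$; therefore the $i$-planes converge in $G(i,n)$ to the row space of $N$, namely $F_i(s)$. The only real difficulty is the rank collapse noted at the outset: the divided-difference substitution is exactly the device that redistributes the colliding nodes among the successive derivative directions, and once it is in place the confluence limit and the full-rank check are routine.
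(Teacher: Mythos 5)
Your proof is correct. One thing to be aware of: the paper does not actually supply a proof of this lemma---it is stated as an immediate consequence of the preceding remark that the osculating plane $F_i(s)$ is the unique $i$-dimensional plane with maximal order of contact with $\gamma$ at $\gamma(s)$, so that it is the limit of secant planes whose points of secancy collide. Your argument fills in exactly the content that this remark sweeps under the rug. The divided-difference substitution is the right device: it is the standard way to resolve the rank collapse that occurs when all rows of the secant matrix tend to $\gamma(s)$, and each step checks out. The change of basis from $\bigl(\gamma(s_1^{(j)}),\dotsc,\gamma(s_p^{(j)})\bigr)$ to the divided differences $v_p^{(j)}=\gamma[s_1^{(j)},\dotsc,s_p^{(j)}]$ is lower triangular with nonzero diagonal entries $\prod_{r<p}(s_p^{(j)}-s_r^{(j)})^{-1}$ (nonzero precisely because the nodes are distinct), so it preserves the span; the confluence property holds because for a polynomial coordinate $t^m$ the divided difference on $p$ nodes is the complete homogeneous symmetric polynomial $h_{m-p+1}$, hence a polynomial in the nodes that evaluates on the diagonal to $\gamma^{(p-1)}(s)/(p-1)!$; and the limiting matrix has full rank since the first $i$ derivatives of the moment curve are independent, so convergence of the Pl\"ucker coordinates gives convergence in $G(i,n)$ to $F_i(s)$. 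The only presentational suggestion is to state explicitly at the outset that ``$\lim$ of spans'' means convergence in the Grassmannian, which you in effect do by working in the Pl\"ucker embedding; with that understanding your write-up is a complete proof, and arguably more informative than the paper's appeal to uniqueness of the plane of maximal contact, since it identifies the limit constructively rather than by characterizing it after the fact.
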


As transversality and reality are preserved under perturbation, we conclude 
that Theorem~\ref{Th:MTV} is a limiting case of the Secant Conjecture.
Conversely, Theorem~\ref{Th:MTV} implies the following.

\begin{theorem}\label{Th:limit}
 Let $\lambda^1,\dotsc,\lambda^m$ be a Schubert problem and $t_1,\dotsc,t_m$ be distinct
 points of the rational normal curve $\gamma$.
 Then there exists an $\epsilon>0$ such that if for each $i=1,\dotsc,m$, 
 $\Fdot^i$ is a flag secant to $\gamma$
 along an interval of length $\epsilon$ containing $t_i$, then the intersection
 \begin{equation}\label{Eq:intL}
  X_{\lambda^1}\Fdot^1\;\cap\; X_{\lambda^2}\Fdot^2\;\cap\ 
     \dotsb\ \cap\;X_{\lambda^m}\Fdot^m
 \end{equation}
 is transverse with all points real.
\end{theorem}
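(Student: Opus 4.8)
The plan is to treat the osculating configuration of Theorem~\ref{Th:MTV} as a base point and obtain the secant statement by a perturbation argument, using that both transversality and reality are stable under small real deformations of the defining flags. By Theorem~\ref{Th:MTV} the intersection
\[
   X_{\lambda^1}\Fdot(t_1)\cap X_{\lambda^2}\Fdot(t_2)\cap\dotsb\cap X_{\lambda^m}\Fdot(t_m)
\]
is transverse and consists of $d:=d(\lambda^1,\dotsc,\lambda^m)$ distinct real points. First I would record that transversality is an open condition. Since the scheme of solutions to a Schubert problem varies algebraically with the defining flags, the locus of $m$-tuples of complex flags for which the intersection is transverse and consists of $d$ reduced points is Zariski-open, hence Euclidean-open. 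So there is a neighborhood $U$ of $(\Fdot(t_1),\dotsc,\Fdot(t_m))$ in the product of flag manifolds on which every tuple yields a transverse intersection of exactly $d$ points.

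Next I would use the preceding Lemma to push the secant flags into $U$. As the length of the interval of secancy about $t_i$ tends to $0$, every flag secant to $\gamma$ along that interval converges to the osculating flag $\Fdot(t_i)$; by compactness of the relevant parameter space this convergence is uniform over all such secant flags. Hence there is an $\epsilon>0$ so that whenever $\Fdot^i$ is secant along an interval of length $\epsilon$ containing $t_i$, the tuple $(\Fdot^1,\dotsc,\Fdot^m)$ lies in $U$, and moreover the whole family obtained by shrinking each interval down to $t_i$ stays in $U$. This immediately gives transversality and the count of $d$ solutions in~\eqref{Eq:intL}.

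It remains to deduce reality, which is the \emph{delicate} point. The secant flags are real, so complex conjugation acts on the $d$-element solution set, fixing exactly the real solutions. Over the real points of $U$ the $d$ solutions form a real $d$-sheeted covering; along any continuous path of real flag-tuples inside $U$ the solutions vary continuously and remain distinct, since transversality persists throughout $U$. A real solution can cease to be real only by colliding with its complex conjugate, which is forbidden by distinctness, so the number of real solutions is locally constant on the real part of $U$. Now shrinking the intervals of secancy down to the points $t_i$ traces a real path inside $U$ joining the secant tuple $(\Fdot^1,\dotsc,\Fdot^m)$ to the osculating tuple $(\Fdot(t_1),\dotsc,\Fdot(t_m))$, along which the count of real solutions is constant. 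As all $d$ solutions are real at the osculating end by Theorem~\ref{Th:MTV}, all $d$ solutions in~\eqref{Eq:intL} are real.

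The main obstacle is this last reality step: making precise that the number of real solutions cannot jump, i.e. that one stays inside the transverse locus $U$ along the entire shrinking path, and that the degenerating secant flags of the Lemma genuinely sweep out a continuous real path rather than merely accumulating at the osculating flag. Care is also needed to make the choice of $\epsilon$ uniform over all admissible secant flags, which is precisely where the compactness argument enters.
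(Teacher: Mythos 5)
Your argument is correct and is essentially the paper's own (the paper only sketches it in the sentence preceding the theorem: the Lemma makes secant flags along short intervals converge to osculating flags, and transversality and reality are preserved under small real perturbations of a transverse, all-real intersection). Your added care about uniformity of $\epsilon$ via compactness and the local constancy of the number of real solutions on the real part of the transverse locus is exactly the right way to make the paper's one-line justification precise.
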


This implies that for generic secant flags $\Fdot^1,\dotsc,\Fdot^m$, the
intersection~\eqref{Eq:intL} is transverse, which implies that secant flags are sufficiently
general for the Schubert Calculus.
Furthermore, Theorem~\ref{Th:limit} reduces the Secant Conjecture~\ref{SecConj} to its
transversality statement. 

%
\subsection{Generalized Secant Conjecture}\label{S:general}

Theorem~\ref{Th:limit} suggests a 
conjecture involving flags that are intermediate
between secant and osculating, and which includes the Secant Conjecture and 
Theorem~\ref{Th:MTV} as special cases.

A \demph{generalized secant subspace} to the rational normal curve
$\gamma$ is spanned by osculating subspaces of $\gamma$.
This notion includes secant subspaces, for a one-dimensional subspace that osculates
$\gamma$ is simply one that is spanned by a point of $\gamma$.
A flag $\Fdot$ is \demph{generalized secant} to $\gamma$ if each of the linear spaces in
$\Fdot$ are generalized secant subspaces.
A generalized secant flag is \demph{secant along an interval} of $\gamma$ if the osculating
subspaces that span its linear spaces osculate $\gamma$ at points of the interval.

\begin{conj}[Generalized Secant Conjecture]\label{Co:GSC}
  For any Schubert problem $\lambda^1,\dotsc,\lambda^m$ on a Grassmannian $G(k,n)$ and any
  generalized secant flags $\Fdot^1,\dotsc,\Fdot^m$ that are secant to a rational normal
  curve $\gamma$ along disjoint intervals, the intersection
\[
   X_{\lambda^1}\Fdot^1\cap X_{\lambda^2}\Fdot^2\cap
   \dotsb\cap X_{\lambda^m}\Fdot^m
\]
 is transverse and consists of $d(\lambda^1,\dotsc,\lambda^m)$ real points.
\end{conj}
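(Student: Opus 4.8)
The Generalized Secant Conjecture~\ref{Co:GSC} contains both the proven Shapiro Conjecture (Theorem~\ref{Th:MTV}, the case where each flag osculates $\gamma$ at a single point) and the open Secant Conjecture~\ref{SecConj} as special cases, so I do not expect an unconditional proof; the plan is instead to reduce the full statement to its transversality assertion alone, in the spirit of Theorem~\ref{Th:limit}. The geometric input is the preceding Lemma: an osculating subspace $F_i(s)$ is a limit of secant subspaces whose spanning points collide at $s$. Consequently a generalized secant subspace, being spanned by osculating subspaces at points of a secancy interval $I$, is a limit of honest secant subspaces obtained by replacing each osculating point with a tight cluster of points of $\gamma$ inside $I$; choosing these clusters compatibly with the nesting $F_1\subset F_2\subset\dotsb$ realizes the whole generalized secant flag, secant along $I$, as such a limit. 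Running this in reverse, the Lemma shows that collapsing the secancy points within each interval down to single points is a path that stays inside the family of disjoint generalized secant configurations and terminates at an osculating configuration with distinct centers.

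Granting the transversality half of Conjecture~\ref{Co:GSC}, reality then follows by the continuity argument that already reduces the Secant Conjecture to transversality. I would take the collapsing path above from the target configuration to an osculating one. The osculating endpoint is itself a disjoint generalized secant configuration, and there Theorem~\ref{Th:MTV} supplies $d(\lambda^1,\dotsc,\lambda^m)$ transverse real points. Every configuration along the path is disjoint generalized secant, hence transverse by assumption, so the intersection forms a finite \'etale family whose $d$ points are distinct analytic functions of the path parameter and cannot escape to infinity, the intersection being proper. A real point could leave the real locus only by meeting its complex conjugate, which would force two of the points to collide and violate transversality; therefore all $d$ points remain real, and at the target the intersection is transverse and consists of $d$ real points.

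The genuine obstacle is thus the transversality statement for arbitrary disjoint generalized secant flags, and I expect this to be the entire difficulty. It is presently known only in special cases: the purely osculating case is Theorem~\ref{Th:MTV}, the short-interval case is Theorem~\ref{Th:limit}, and the arithmetic-progression instances come from Proposition~\ref{MTV} and its Corollary. To handle the general case one would have to extend either the Mukhin--Tarasov--Varchenko integrable-systems framework or the Eremenko--Gabrielov complex-analytic methods from osculating flags to generalized secant flags, controlling the ramification of the relevant Wronski-type map over the locus of disjoint secant parameters; equivalently, by the Eremenko--Gabrielov principle used in the Corollary to Proposition~\ref{MTV}, it would suffice to establish reality over an open set of such parameters. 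A natural intermediate goal is to first prove transversality for all disjoint secant flags, namely the Secant Conjecture~\ref{SecConj}, and then pass to generalized secant flags by a further limiting argument using the Lemma; but bridging from the discrete-Wronskian cases of Proposition~\ref{MTV} to fully general secant configurations is exactly where current techniques stop, and I would regard that bridge as the crux.
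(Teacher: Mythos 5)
The statement you were asked to prove is a conjecture that the paper itself does not prove; it only states it, notes that it contains Theorem~\ref{Th:MTV} and the Secant Conjecture~\ref{SecConj} as special cases, and supports it with limiting arguments and computational evidence. Your assessment is therefore correct and matches the paper's own stance: your reduction of reality to transversality via a collapsing path and the Lemma is exactly the mechanism the paper uses when it observes that Theorem~\ref{Th:limit} reduces the Secant Conjecture to its transversality assertion, and your identification of the known special cases (osculating flags, short intervals, arithmetic progressions via Proposition~\ref{MTV}) agrees with Section~\ref{Sec:SpecialCases}.
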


This includes the Secant Conjecture as the case when all of the flags
are secant flags, but it also includes Theorem~\ref{Th:MTV}, which 
is when all flags are osculating.
Many of the computations in our 
experiment tested instances of this conjecture where one or two flags were 
osculating while the rest were secant flags.
This choice was made to make the computation feasible for some Schubert problems.

There is also a Generalized Cosecant Conjecture and a corresponding version of
Theorem~\ref{Th:limit}, which we do not formulate. 

%
\section{The problem of four secant lines}\label{S:chords}

We give an in-depth look at 
the Schubert problem $\I^4=2$ on $G(2,4)$ where $\I$ denotes the
Schubert condition that a two-plane in $\C^4$ meets a fixed two-plane nontrivially.
Equivalently,  $\I^4=2$ is the Schubert problem of lines in $\P^3$ that meet four fixed
lines. 
Let $\gamma: \R \rightarrow \P^3$ be a rational normal curve.
We consider the lines in $\P^3$ that meet four lines $\ell_1,\ell_2,\ell_3,\ell_4$ which are 
secant to $\gamma$.

For $s_1, s_2\in\R$ let $\ell(s_1,s_2)$ denote the secant line to $\gamma$ through
$\gamma(s_1)$, $\gamma(s_2)$. 
Given $s_1 < \dotsb < s_8$, the Secant Conjecture (which is in this case a theorem of
Eremenko, et al.~\cite{EGSV}) asserts that both lines
meeting the four fixed lines
\[  
    \ell(s_1,s_2)\,,\ \  \ell(s_3,s_4)\,,\ \ \ell(s_5,s_6)\,,\ \ \ell(s_7,s_8)  
\]
are real.
We investigate phenomena beyond the Secant Conjecture by 
letting $\rho$ be a permutation of $\{1,\dotsc,8\}$ and
taking $\ell_1,\ell_2,\ell_3,\ell_4$ to be 
$\ell(s_{\rho(1)}, s_{\rho(2)}), \dotsc, \ell(s_{\rho(7)}, s_{\rho(8)})$.

There are 17 combinatorial configurations of four secant lines along $\gamma\simeq S^1$. 
These are indicated by the chord diagrams in Table~\ref{T:overlap numbers}, which shows 
the number of real solutions found when we computed $100,000$ instances of each
configuration.
\begin{table}[htb]
\caption{Configurations of four secant lines with results of an experiment.}
\label{T:overlap numbers}
\begin{tabular}{c r|r||r|r||r|r|r|r|r||}
\cline{3-10}
& &
 \includegraphics[height=38pt]{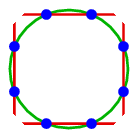}&
 \includegraphics[height=38pt]{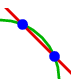}&
 \includegraphics[height=38pt]{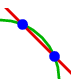}&
 \includegraphics[height=38pt]{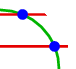}&
 \includegraphics[height=38pt]{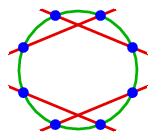}&
 \includegraphics[height=38pt]{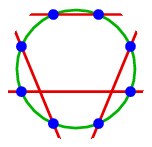}&
 \includegraphics[height=38pt]{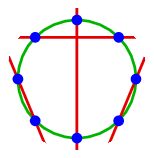}&
 \includegraphics[height=38pt]{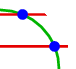}\\ \cline{2-10}
real& \multicolumn{1}{|c||}{0}  &    &   &23723&     &    &     &     &29398 \\ \cline{2-10}
roots& \multicolumn{1}{|c||}{2}  & 100000&100000&76277&100000&100000&100000&100000&70602 \\\cline{2-10}

\end{tabular}\vspace{12pt}

\begin{tabular}{r |r|r|r|r||r|r||r|r||r||}\cline{2-10}
& \includegraphics[height=38pt]{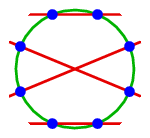}&
 \includegraphics[height=38pt]{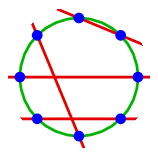}&
 \includegraphics[height=38pt]{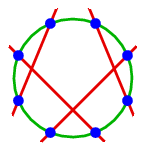}&
 \includegraphics[height=38pt]{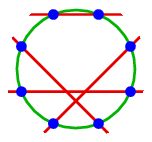}&
 \includegraphics[height=38pt]{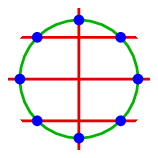}&
 \includegraphics[height=38pt]{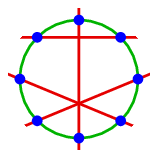}&
 \includegraphics[height=38pt]{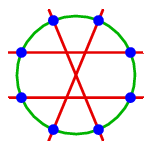}&
 \includegraphics[height=38pt]{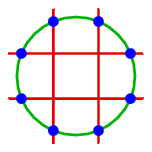}&
 \includegraphics[height=38pt]{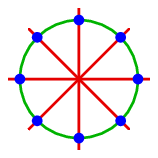}\\\hline
 \multicolumn{1}{|c||}{0} &     &     &     &52395&     &      &     &65783&      \\\hline
 \multicolumn{1}{|c||}{2} & 100000&100000&100000&47605&100000&100000 &100000 &34217& 100000 \\\hline
\end{tabular}


\end{table}
For most configurations, we only observed real solutions, and 
in only four configurations did we find any non-real solutions.
We will give a simple explanation of this observation.

Counting constants shows there is a unique doubly-ruled quadric surface $Q$
that contains the lines $\ell_1$, $\ell_2$, and $\ell_3$ in one ruling, as shown in 
Figure~\ref{F:secant}.
\begin{figure}[htb]
  \begin{picture}(290,125)(0,8)
   \put(0,0){\includegraphics[height=135pt,viewport=5 77 430 282,clip]{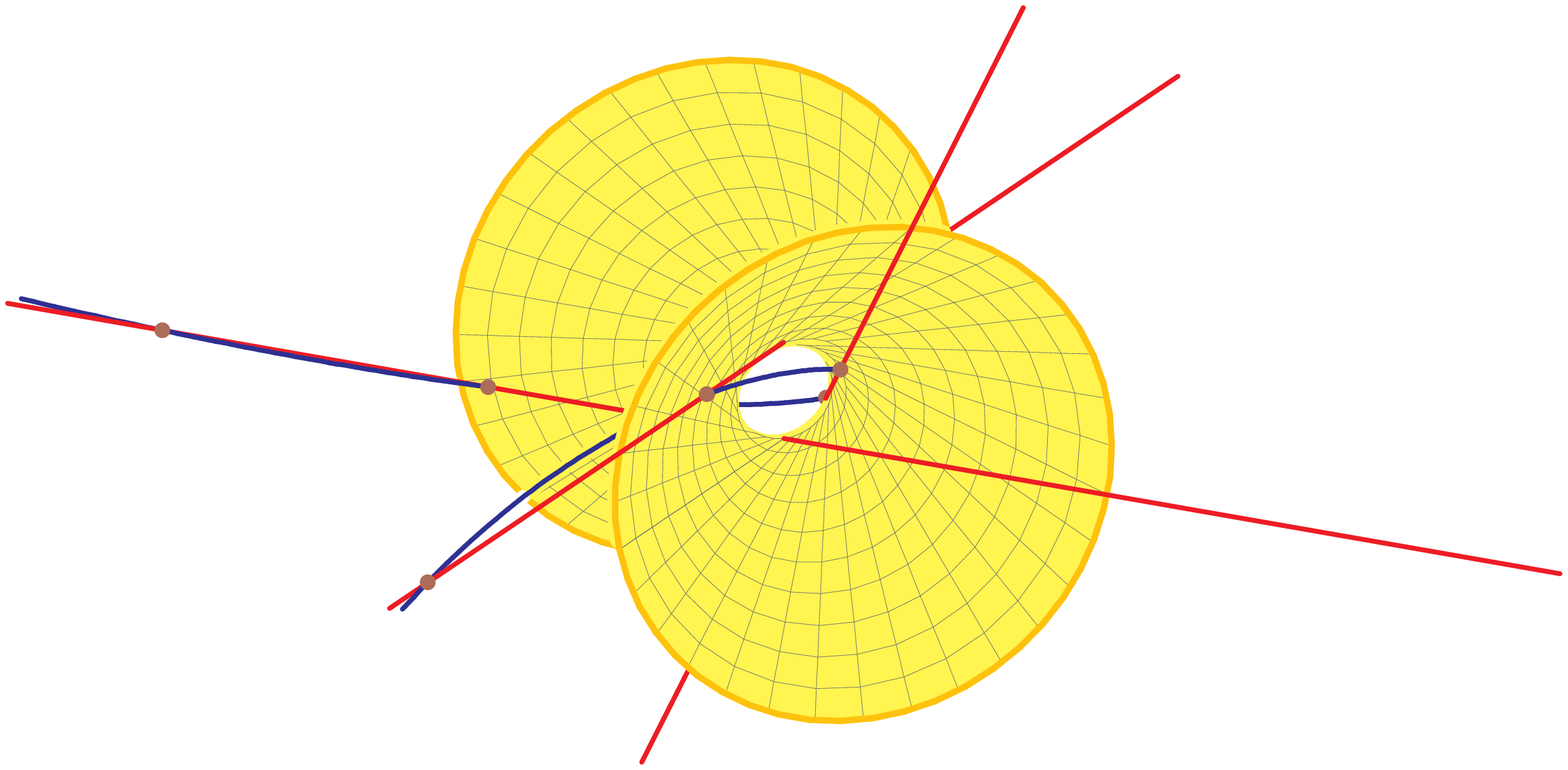}}
   \thicklines
   \put(105,43){$\gamma$}\put(115,44){\vector(2,-1){25}}
   \put(97,0){$\ell_1$}
   \put(281,113){$\ell_2$}
   \put(281,37){$\ell_3$}
  \end{picture}
\caption{Quadric through three secant lines.}
\label{F:secant}
\end{figure}
The two lines of the second ruling of $Q$ through the two points of intersection of $\ell_4$
with $Q$ are the solutions to the Schubert problem $\I^4 = 2$ for these four secant lines.

The quadric $Q$ divides its complement in $\R\P^3$ into two connected components
(the domains where the quadratic form is positive or negative), called the
\demph{sides} of $Q$. 
Three lines $\ell_1$, $\ell_2$, $\ell_3$ give six points of secancy which are the intersections
of $\gamma$ with $Q$ and which divide $\gamma$ into six segments
that alternate between the two sides of $Q$.
If the fourth secant line $\ell_4$ has its two points of secancy lying on opposite sides
of $Q$, then $\ell_4$ has a real intersection with $Q$,
so that the Schubert problem has one (and hence two) real solutions.
The points of secancy of $\ell_4$ lie on opposite sides of $Q$ if in the interval between the
two points of secancy, the curve $\gamma$ crosses $Q$ an odd number of times. 
That is, the interval contains an odd number of points of secancy of the lines $\ell_1$,
$\ell_2$, and $\ell_3$.

This simple topological argument shows that if at least one of the four secant lines has
such an odd interval of secancy, then the Schubert problem will have only real solutions,
independently of the actual positions of the secant lines. 
Twelve of the 17 configurations have at least one odd interval of secancy, and
therefore will always give two real solutions.
Four configurations with only even intervals of secancy were observed to have 
either zero or two real solutions.
Only the configuration with disjoint intervals of secancy has even intervals of secancy,
and yet has only real solutions. 
This deeper fact was proven in~\cite{EGSV}.

%
\section{Overlap number}\label{S:overlap}
For most Schubert problems, the number of different configurations of secant flags 
is astronomical. 
Consider the problem $\I^4\cdot \sThI^2=12$ on the Grassmannian
of $3$-planes in $7$-space.
The condition $\I$ has relevant subspace
$F_4$ and the condition $\sThI$ has relevant subspace $F_5$.
The resulting 26 points of secancy have at least
\[
   \left\lceil\binom{26}{4,4,4,4,5,5}\cdot\frac{1}{4!}\cdot
       \frac{1}{2!}\cdot\frac{1}{26}\cdot\frac{1}{2}\right\rceil\
   =\  
   3,381,948,761,563
\]
combinatorially different configurations.
To cope with this complexity,
 we introduce a statistic on these configurations---the overlap
  number---which  is zero if and only if the flags are disjoint, and we tabulate the
results of our experiment using this statistic.

In an instance of a Schubert problem $\lambda^1,\dotsc,\lambda^m$ with relevant subspaces
of respective dimensions $i_1,\dotsc,i_m$, to define the relevant
subspaces of the $j$th secant flag, 
\[
   F^j_1\ \subsetneq\ F^j_2\ \subsetneq\ \dotsb\  \subsetneq\ F^j_{i_j}\,,
\]
we need a choice of an ordered set $T_j$ of $i_j$ points of $\gamma$.
The overlap number measures how much these sets of points 
$T_1,\dotsc,T_m\subset\gamma$ overlap.

Let $T$ be their union.
Since $\gamma$ is topologically a circle, removing a point $p\in\gamma\setminus T$, 
we may assume that $T_1,\dotsc,T_m\subset\R$.
Each set $T_j$ defines an interval $I_j$ of $\R$ and we let $o_j$ be the number of points of
$T\setminus T_j$ lying in $I_j$. 
This sum $\DeCo{\Sigma}:=o_1+\dotsb+o_m$ depends upon $p\in\gamma\setminus T$, and the 
\demph{overlap number} is the minimum of these sums as $p$ varies.

For example, consider a Schubert problem with relevant subspaces of dimensions $3$, $2$, and
$2$.
Suppose that we have chosen seven points on $\gamma$ in groups of $3$, $2$, and $2$.
This is represented schematically on the left in Figure~\ref{F:overlap}, in which $\gamma$
\begin{figure}[htb]
\caption{Computation of overlap number.}
\label{F:overlap}
\[
   \raisebox{-42pt}{%
     \begin{picture}(90,90)
      \put(0,0){\includegraphics{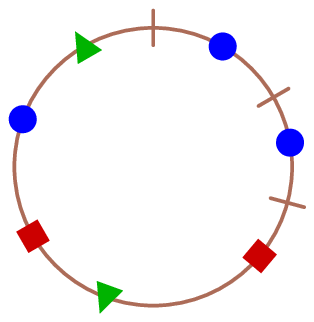}}
      \put(40,72){$p_1$}
      \put(64,60){$p_2$}
      \put(68,35){$p_3$}
    \end{picture}}
   \qquad\qquad
  \begin{tabular}{|c|r|c|c|c|c|}\hline
    &&\includegraphics{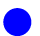}&
     \includegraphics{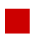}&\includegraphics{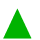}&$\Sigma$\\\hline
    \raisebox{1pt}{$p_1$}&\includegraphics{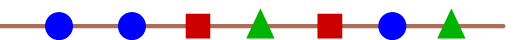}&3&1&2&6\\\hline
    \raisebox{1pt}{$p_2$}&\includegraphics{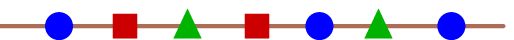}&4&1&2&7\\\hline
    \raisebox{1pt}{$p_3$}&\includegraphics{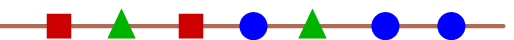}&1&1&2&4\\\hline
  \end{tabular}
\]
\end{figure}
is a circle, and the points in the sets $T_1$, $T_2$, and $T_3$ are represented by 
circles (\includegraphics{figures/cir.eps}), squares (\includegraphics{figures/sqr.eps}),
and triangles (\includegraphics{figures/tri.eps}), respectively.
For each of three points $p_1$, $p_2$, and $p_3$ of $\gamma$, we compute the number $o_i$
and their sum $\Sigma$, displaying the results in the table on the right-hand side of 
Figure~\ref{F:overlap}.
The minimum of the sum $\Sigma$ for all choices of points is achieved by $p_3$.

If one (or more) of the flags are osculating, we compute the overlap number by treating
the point of osculation as a point with multiplicity equal to the dimension of the relevant 
subspace.

%
\section{Experimental evidence for the Secant Conjecture}\label{Sec:Exp}

We tested the Secant Conjecture by conducting a massive experiment whose data are 
available on-line~\cite{WWW_secant_Exp}.
This experiment used symbolic exact arithmetic to compute the number of real solutions
for specific instances of Schubert problems.
These computations are possible because Schubert problems are readily modeled on a
computer, and for those of moderate size, we may algorithmically determine the number of
real solutions with software tools.
Our experiment primarily used the mathematical software Singular~\cite{SINGULAR} and Maple
(see \cite{Exp-FRSC} for further details about the implementation of the computations,
including a comprehensive list of software tools used).
If the software is  reliably implemented, which we believe, then this computation provides a
proof that the given instance has the computed number of real solutions.
This procedure may be semi-automated and run on supercomputers (as described
in~\cite{Exp-FRSC}), which allows us to amass the 
considerable evidence we have collected in support of the Secant Conjecture.

\subsection{Experimental data}
Table~\ref{T:number_problems} shows how many Schubert problems on each Grassmannian of
$k$-planes in $n$-space had been studied when we halted the experiment on 26 May 2010.
\begin{table}[htb]
\caption{Schubert problems studied}
\label{T:number_problems}
 \begin{tabular}{|c||c|c|c|c|c|}\hline
  $k\backslash n{-}k$ & 2&3&4&5&6\\\hline\hline
     2 & 1 & 5 & 22 & 81 & 55 \\\hline
     3 & 5 & 64 & 114 & 79 & \\\hline
     4 &22 & 107 & 67 & &\\\hline
     5 & 81 & & & & \\\hline
 \end{tabular}
\end{table}
Our experiment not only tested the Secant Conjecture
but also studied the relationship between the overlap number
and the number of real solutions 
for many Schubert problems on small Grassmannians.
We  computed $2,058,810,000$ 
instances of 703 Schubert problems.
About one-fourth of these ($498,737,669$) 
were instances of the (Generalized) Secant
Conjecture, and the rest involved non-disjoint secant flags.  
The Generalized Secant Conjecture held in every computed instance.
The remaining $1,560,072,331$ 
instances involved secant flags with 
some overlap in their intervals of secancy,
measured by the overlap number.

The experiment computed Schubert problems using either 0, 1, or 2 osculating flags, with
the rest secant flags.
In the on-line database~\cite{WWW_secant_Exp}, this number of osculating flags determines the 
\demph{computation type} which is 1, 2, or 3 for 0, 1, or 2 osculating flags.
The experiment used randomly chosen flags, which were generated using random generator seeds
that are stored in our database, so that all computations are reproducible.

Table~\ref{T:X^4X31^2=12} shows part of the data we obtained testing the full Secant
Conjecture for the Schubert problem $\I^4\cdot\sThI^2=12$ on $G(3,7)$.
\begin{table}[htb]
\caption{Experimental data for $\I^4\cdot \sThI^2=12$ with all secant flags.}
\label{T:X^4X31^2=12}
\noindent{\small 

\noindent\begin{tabular}{r|r||r|r|r|r|r|r|r|r|r|c||r|}
\multicolumn{13}{c}{Overlap Number}\\
\cline{2-13}
\multirow{11}{*}{\begin{sideways}Real Solutions\end{sideways}} &
\textbackslash & 0&1&2&3&4&5&6&\ $\dotsb$\ &9&\ $\dotsb$\
&Total\\\cline{2-13}\noalign{\smallskip}\cline{2-13}
& 0&       & &       &      &      &      &      &$\dotsb$&     1&$\dotsb$&     691\\\cline{2-13}
& 2&       & &       &      &      &     9&     7&$\dotsb$&     8&$\dotsb$&   72857\\\cline{2-13}
& 4&       & &       &      &    79&   917&  1990&$\dotsb$&   524&$\dotsb$&  523362\\\cline{2-13}
& 6&       & &       &   814&  5713& 12550& 18330&$\dotsb$&  4531&$\dotsb$& 1418911\\\cline{2-13}
& 8&       & &       &   635&  4646& 15947& 17180&$\dotsb$&  6055&$\dotsb$& 1983639\\\cline{2-13}
&10&       & &       &  1226&  6912& 18403& 17236&$\dotsb$&  6801&$\dotsb$& 1649923\\\cline{2-13}
&12&2320873& &  51120& 99413&206398&203426&179955&$\dotsb$& 42883&$\dotsb$& 4350617\\\cline{2-13}
&Total&2320873&&51120&102088&223748&251252&234698&$\dotsb$& 60803&$\dotsb$&10000000\\\cline{2-13}
\end{tabular}

}
\end{table}
%
We used $7.52$ gigahertz-years to compute $10,000,000$ instances of this Schubert problem, all involving secant flags. 
The rows are labeled with the even integers from $0$ to $12$, as the number of real solutions
has the same parity as the number of complex solutions.
The first column with overlap number $0$ represents tests of the Secant Conjecture.
Since the only entry is in the row for $12$ real solutions, the Secant Conjecture was
verified in $2,320,873$ instances. 
The column labeled overlap number $1$ is empty because flags for this problem cannot have
overlap number 1.   
Perhaps the most interesting feature is that for overlap number $2$, all computed solutions
were real, while for overlap number $3$ at least six solutions were real, and for overlap
number 4, at least four were real.
It is only with overlap number 9 and above that we computed an instance with no real solutions.

We also computed $200,000,000$ instances of this same Schubert problem with four secant
flags (for the Schubert variety $X_{\sI}$) and two osculating flags (for the Schubert
variety $X_{\sThI})$.
These data are compiled in Table~\ref{T:Sample_table}.
\begin{table}[htb]
\caption{Experimental data for $\I^6\cdot \sThI^2=12$ with two osculating flags.}
\label{T:Sample_table}
\noindent{\small 

\noindent\begin{tabular}{r|r||r|r|r|r|r|r|r|c||r|}
\multicolumn{11}{c}{Overlap Number}\\
\cline{2-11}
\multirow{10}{*}{\begin{sideways}Real Solutions\end{sideways}} &
\textbackslash & 0&1&2&3&4&5&6&\ $\dotsb$\
&Total\\\cline{2-11}\noalign{\smallskip}\cline{2-11}
& 0&        & &       &       &       &       &       &$\dotsb$&   13894\\\cline{2-11}
& 2&        & &       &       &       &   3799&     19&$\dotsb$& 1357929\\\cline{2-11}
& 4&        & &       &       &  24756&  93214& 186521&$\dotsb$&12146335\\\cline{2-11}
& 6&        & &       &       & 210843& 495977& 731938&$\dotsb$&29925437\\\cline{2-11}
& 8&        & &       &       & 254875& 640663& 508884&$\dotsb$&36708450\\\cline{2-11}
&10&        & &       &       & 153520& 442928& 229530&$\dotsb$&26500908\\\cline{2-11}
&12&49743228& &1171814&2324847&5900258&5944524&3971316&$\dotsb$&93347047\\\cline{2-11}
&Total&49743228&&1171814&2324847&6544252&7621105&5628208&$\dotsb$&200000000\\\cline{2-11}
\end{tabular}
%
%
}
\end{table}
This computation took $261$ gigahertz-days---twenty times as many instances as
Table~\ref{T:X^4X31^2=12} in about one-tenth of the time.
This speed-up occurs because using two osculating flags gives a formulation 
with only four variables instead of 12.
This computation tested the Generalized Secant Conjecture; its
computed instances form the first column.
As the only entry in that column is in the row for $12$ real solutions, the Generalized
Secant Conjecture was verified in $49,743,228$ instances. 
As with Table~\ref{T:X^4X31^2=12} there is visibly an inner border to these data,
but for this computation there are instances with no real solutions starting with overlap
number eight.

%
\subsection{Computing Schubert problems} 
 
A $k\times(n{-}k)$ matrix $X\in\C^{k\times(n{-}k)}$ determines a general point in 
$G(k,n)$, namely the row space $H$ of the $k\times n$ matrix (also written $H$)  
 \begin{equation}\label{Eq:LocCoords} 
   H\ :=\ \bigl(I_k\ \colon\ X\bigr)\,. 
 \end{equation} 
If we represent an $i$-plane $F_i$ as the row space of an  
$i\times n$ matrix $F_i$ of full rank, then  
 \begin{equation}\label{Eq:rankConditions}
   \dim H\cap F_i\ \geq\ j\ \Longleftrightarrow\  
   \rank \left(\begin{matrix}H\\F_i\end{matrix}\right)\  
   \leq\ k+i-j\,, 
 \end{equation}
which is given by the vanishing of all $(k{+}i{-}j{+}1)\times(k{+}i{-}j{+}1)$ subdeterminants. 
We represent a flag $\Fdot$ by a full rank $n\times n$ matrix whose first $i$ rows span $F_i$.
Then~\eqref{Eq:rankConditions} leads to equations for the Schubert 
variety $X_\lambda\Fdot$ in the coordinate patch~\eqref{Eq:LocCoords}. 
In practice, we only need an $(n{-}k{+}i{-}\lambda_i)\times n$ matrix, where $\lambda_i$ is 
the last nonzero part of $\lambda$.
 
To represent a secant $i$-plane, we use an $i\times n$ matrix $F_i(t_1,\dotsc,t_i)$ 
whose $j$th row is the vector  $\gamma(t_j)$, where $t_1,\dotsc,t_i\in \R$, 
and $\gamma(t) = (1,t,\dotsc,t^{n-1})$ is the rational normal curve.  
Similarly, the $i$-plane $F_i(t)$ osculating $\gamma$ at the  
point $\gamma(t)$ is represented by the $i\times n$ matrix whose $j$th row is 
$\gamma^{(j-1)}(t)$.
 
For example, Conjecture~\ref{C:first} involves the Schubert problem $\I^6=5$ on $G(2,5)$ 
where $\I$ is the Schubert condition of a $2$-plane meeting a $3$-plane. 
The solutions are $2$-planes spanned by the first two rows of the matrix in~\eqref{Eq:G25}. 
The last three rows in the matrix are the points $\gamma(s_i)$, $\gamma(t_i)$, $\gamma(u_i)$
that span the $3$-plane of a secant flag. 
 
We use the computer algebra system Singular~\cite{SINGULAR} to compute an 
\demph{eliminant} of the polynomial system modeling a given instance of the Schubert 
problem $\lambda^1,\dotsc,\lambda^m$. 
This is a univariate polynomial $f(x)$ whose roots are all the 
$x$-coordinates of solutions to the Schubert problem in the patch~\eqref{Eq:LocCoords}.  
(See, for example, \cite[Chap.~2]{CLO}.)
By the Shape Lemma~\cite{BMMT}, when the eliminant $f(x)$ has degree equal to
$d(\lambda^1,\dotsc,\lambda^m)$ and is  
square-free, then the solutions to the Schubert problem are in one-to-one correspondence
with the roots of the eliminant $f(x)$, with real roots corresponding to real solutions.
We use the {\tt realroot} command of the mathematical software Maple to compute the number of real
roots of the eliminant $f(x)$. 
 
If the eliminant does not satisfy these hypotheses, then 
we compute an eliminant with respect to a different coordinate
of the patch~\eqref{Eq:LocCoords}.
It is sometimes the case that no coordinate provides a satisfactory eliminant.
This will occur if there is a solution with multiplicity (the Schubert varieties do not
meet transversally) or if the coordinate patch does
not contain all solutions.
In general it will occur when the computed instance lies in a \demph{discriminant
  hypersurface} in the space of all instances.
When developing and testing our software for this experiment, we observed that this
situation was extremely rare, and it only occurred when the overlap number was positive and
there were multiple solutions,
which agrees with the transversality assertion in the Secant Conjecture.
When our software detects that no coordinate provides a satisfactory eliminant, it
deterministically perturbs the points of secancy, preserving the overlap number, and
repeats this elimination procedure. 
This has always worked to give an eliminant satisfying the hypotheses.

As with Tables~\ref{T:X^4X31^2=12} and ~\ref{T:Sample_table}, working in a different set of
local coordinates enables us to efficiently compute  
instances of the Generalized Secant Conjecture~\ref{Co:GSC} for one (and sometimes two)
osculating flags. 
With one flag osculating at $\gamma(\infty)$, we may use local coordinates
as described in~\cite{RSSS}.

With two osculating flags, there is a smaller choice of local coordinates available.
Suppose that $\be_1,\dotsc,\be_n$ are the standard basis vectors corresponding to columns
of our matrices.
Then the flag $\Fdot(\infty)$ osculating the rational normal curve
$\gamma$ at $\gamma(\infty)=\be_n$ and the flag $\Fdot(0)$ osculating at $\gamma(0)=\be_1$ have
\[ 
   F_i(\infty)\ =\ \Span\{\be_{n{+}1{-}i}, \dotsc, \be_{n-1}, \be_n\}
   \qquad\mbox{and}\qquad
   F_i(0)\ =\ \Span\{ \be_1, \be_2,\dotsc, \be_i\}\,. 
\] 
General points in $X_\lambda\Fdot(\infty)\cap X_\mu\Fdot(0)$ are represented by 
$k\times n$ matrices where row $i$ has a 1 in column $\lambda_{k{+}1-i}+i$,
arbitrary entries in subsequent columns up to column $n{-}k{-}1{+}i{-}\mu_i$,
and $0$'s elsewhere.
Here is such a matrix with $k=3$, $n=8$, $\lambda=\TI$, and $\mu=\sThI$:
\[
  \left(\begin{matrix}
    \Blue{1}&\M{*}&\M{*}&\Red{0}&\Red{0}&0&\Red{0}&0\\
    0&\Red{0}&\Blue{1}&\M{*}&\M{*}&\M{*}&\Red{0}&0\\
    0&\Red{0}&0&\Red{0}&\Blue{1}&\M{*}&\M{*}&\M{*}
   \end{matrix}\right) .
\]

%
\subsection{Numerical experimentation}
In~\cite{HS}, 
25,000 instances of the Shapiro Conjecture for the
Schubert problem $\Tw^8=126$ were computed, and for each instance 
the software alphaCertified used 256-bit precision to softly certify that all solutions
were real. 
(A \demph{soft certificate} is one computed with floating point arithmetic that would be rigorous
 if computed with exact rational arithmetic.)
The solutions were computed using the software package Bertini~\cite{BHSW06},
which is based on numerical homotopy continuation~\cite{SW05}.
Given a system of $n$ polynomial equations in $n$ unknowns, Smale's
$\alpha$-theory~\cite{S86} gives algorithms for certifying that Newton iterations 
applied to an approximate solution will converge to a solution,
and also may be used to certify that the solution is real.
As explained in~\cite{HS}, this Schubert problem has such a formulation.
These algorithms are implemented in the software alphaCertified~\cite{HS}.

%
\section{Lower bounds and inner borders}\label{Sec:inner} 
 
The most ubiquitous and enigmatic phenomenon that we have observed in our data is 
the apparent ``inner border'' in many of the tables. 
Typically, we do not observe instances with zero or few real 
solutions when the overlap number is small. 
This is manifested by a prominent staircase separating observed pairs of 
(real solutions, overlap number) from unobserved pairs. 
This feature is clearly visible in Tables~\ref{T:X^4X31^2=12} and~\ref{T:Sample_table},
and in Table~\ref{T:W1^8} for the problem $\I^8=14$ in
$G(2,6)$.  
\begin{table}[htb] 
\caption{Real solutions vs.~overlap number for $\I^8=14$.} 
\label{T:W1^8} 
\noindent{\small  
\noindent\begin{tabular}{|r||r|r|r|r|r|r|r|c||r|}\hline 
\textbackslash & 0&1&2&3&4&5&6&\ $\dotsb$\ &Total\\\hline\hline 
0     &       &&     &      &      &      &      &\ $\dotsb$\ &    4272\\\hline 
2     &       &&     &      &      &      &      &\ $\dotsb$\ &  127217\\\hline 
4     &       &&     &      &   693&  1481&  6660&\ $\dotsb$\ &  879658\\\hline 
6     &       &&     &      &   224&   510&  2541&\ $\dotsb$\ & 2304233\\\hline 
8     &       &&     &      &   526&   939&  3561&\ $\dotsb$\ & 2914837\\\hline 
10    &       &&     &      &  1052&  2074&  6985&\ $\dotsb$\ & 2205198\\\hline 
12    &       &&     &      &  1556&  2595&  7300&\ $\dotsb$\ & 1224667\\\hline 
14    &3328772&&60860&120625&310819&246910&237704&\ $\dotsb$\ & 5339918\\\hline\hline 
Total &3328772&&60860&120625&305870&254509&264751&\ $\dotsb$\ &15000000\\\hline 
\end{tabular}  
}
\end{table} 
There, it is only with overlap number 8 or larger that we observe instances with two
real solutions; and with overlap number 16 or larger, instances with no real solutions. 
(These columns are not displayed for reasons of space.) 
 
This problem involves $2$-planes meeting eight secant $4$-planes. 
There are over $10^{18}$ configurations of eight secant $4$-planes,
and hence it is impossible to systematically study all configurations as in 
Section~\ref{S:chords}. 
This is the case for most of the problems we studied. 
Because of the coarseness of our measure of overlap, we doubt it is possible to formulate a 
meaningful conjecture about this inner border based on our data. 
Nevertheless, we believe that this problem, like the problem of four lines, contains rich 
geometry, with certain configurations having a lower bound on the number of real solutions. 
 
There are many meaningful polynomial systems or geometric problems having a non-zero
lower bound on their number of  real solutions.
These include rational curves interpolating points on toric del {P}ezzo 
surfaces~\cite{IKS03,IKS04,IKS09,Mi05,W}, sparse polynomial 
systems from posets~\cite{JW07,SS}, and some lower bounds in the Schubert
calculus \cite{AzarGab,EG01}.  
 
Lower bounds and inner borders were also observed studying the 
Monotone Conjecture~\cite[\S~3.2.2]{RSSS}. 
The original example of a lower bound was due to Eremenko and Gabrielov~\cite{EG01}.
The Wronskian of linearly independent polynomials $f_1(t),f_2(t),\dotsc,f_k(t)$ of degree
$n{-}1$,   
\[ 
   \W(f_1,f_2,\dotsc,f_k)\ :=\ \det\left(  \begin{matrix} 
           f_1(t) & f'_1(t) & f''_1(t) & \dotsb & f^{(k-1)}_1(t)\\ 
           f_2(t) & f'_2(t) & f''_2(t) & \dotsb & f^{(k-1)}_2(t)\\ 
           \vdots & \vdots  & \vdots  & \ddots & \vdots \\ 
           f_k(t) & f'_k(t) & f''_k(t) & \dotsb & f^{(k-1)}_k(t)\\ 
    \end{matrix}\right) ,
\] 
has degree $k(n{-}k)$, which gives a finite map 
$\W\colon G(k, \C_{n-1}[t])\longrightarrow\P^{k(n-k)}$ 
with the general fiber consisting of $d(\I^{k(n-k)})$ (see~\eqref{Eq:Schubert_number})
linear spaces of polynomials.  
Theorem~\ref{Th:MTV} implies that if
$w(t)$ is a polynomial with $k(n{-}k)$ distinct real roots then each of the $d(\I^{k(n-k)})$
points in the fiber of $\W$ over $w(t)$ is real. 
Eremenko and Gabrielov showed that if $n$ is odd, there is a non-trivial lower bound 
on the number of real spaces of polynomials in the fiber of $\W$ over {\it any} polynomial 
$w(t)$ with real coefficients. 
 
Azar and Gabrielov~\cite{AzarGab} studied the problem $\I^{2n{-}4}$ in 
$G(n{-}2,n)$ of $(n{-}2)$-planes in $\C^n$ which meet one secant line and $2n{-}5$ tangent 
lines.
When the interval of secancy contains no tangent points, this is an instance of the
Generalized Secant Conjecture~\ref{Co:GSC}.
They establish lower bounds on the number of real solutions which depend upon the
configuration of the points of secancy and tangency.

%
\section{Gaps}\label{Sec:gaps}

The Schubert problem $\sTT^4 = 6$ on $G(4,8)$ involves $4$-planes whose intersection with 
each of four general $4$-planes is at least two-dimensional.
We computed $1,000,000$ instances of this problem, obtaining the results in
Table~\ref{T:real gap}. 
\begin{table}[htb]
\caption{Real solutions vs.\ overlap number for $W_{\sTT}^4 = 6$.}
\label{T:real gap}
\noindent \small {
\begin{tabular}{|r || r|r|r|r|r|r|r|r || r|}
\hline
  & 0 & 1 & 2 & 3 & 4 & 5 & 6 & \ $\dotsb$ \ & Total \\ \hline\hline
0 & &  &  &  & & & & & 0 \\  \hline
2 &  &  &  & 1441 & 7730 & 14277 & 16636 & \ $\dotsb$ \ & 147326 \\ \hline
4 & &  &  &  & & & & & 0 \\  \hline
6 & 280304 &  & 13131 & 25708 & 62833 & 55919 & 57719 & \ $\dotsb$ \ & 852674 \\ \hline
\hline
Total & 280304 & 0 & 13131 & 27149 & 70563 & 70196 & 74355 & \ $\dotsb$ \ & 1000000 \\ \hline
\end{tabular}

}
\end{table}
A system of real polynomial equations with 6 solutions can, {\sl a priori}, have $0$, $2$,
$4$, or $6$ real solutions; yet, strikingly, this Schubert problem only has $2$ or $6$
real solutions, never $0$ or $4$. 

Although our observations involved only secant flags, this phenomenon holds for any
real flags. 
As we describe below,
this follows from ideas in Vakil's discussion of this Schubert problem in \cite[\textsection 3.13]{MR2247966}.
(Vakil's discussion, however, focuses on explaining a different phenomenon, namely, Derksen's observation
that the Galois group of this Schubert problem is \demph{deficient},
i.e., smaller than the symmetric or alternating group.)

We consider the \demph{auxiliary Schubert problem} $\Th^4 = 4$ on $G(2,8)$,
counting $2$-planes which meet four general $4$-planes.
Given $4$-planes $W_1,\dotsc,W_4$, let $P_1,\dotsc,P_4$ be the $2$-planes which meet them.
Then the solutions to the original Schubert problem $W_{\sTT}^4=6$ are precisely the $6$ sums
of the form $P_i + P_j$. 
Such a sum is real if and only if $P_i$ and $P_j$ are each real or if $P_i$ and $P_j$ are a
pair of complex conjugate subspaces.

If the $W_i$ are real, then there can be $0$, $1$, or $2$ complex conjugate pairs among
the $P_i$. 
Then the number of solutions $P_i + P_j$ which are real is, respectively, $6$, $2$, and $2$.
This explains the observations in Table~\ref{T:real gap}.

This is the first in a family of Schubert problems in $G(4,2n)$ for $n\geq 4$ with such
gaps in their numbers of real solutions.
These involve enumerating the $4$-planes which have at least a two-dimensional
intersection with each of four general $n$-planes in
$\C^{2n}$.
For each, there is an auxiliary Schubert problem on $G(2,2n)$ of $2$-planes meeting four
general $n$-planes.
This will have $n$ solutions, and the solutions to the original problem are $4$-planes
spanned by pairs of solutions to the auxiliary problem.
The original problem will have $\binom{n}{2}$ solutions, corresponding to pairs of
solutions to the auxiliary problem.
A solution is real either when both elements of the pair are real or when the pair
consists of complex conjugate solutions.
We remark that the auxiliary problem may have any number $r$
of real solutions, where $0\leq r\leq n$ and $n{-}r$ is even---this may be deduced from the
description of the Schubert problem in terms of elementary geometry given, for example,
in~\cite[\S~8.1]{So97a}. 
These restrictions are identical to restrictions on the number of real quadratic factors
of a general real polynomial of degree $n$, as in~\cite[Theorem~7.8]{SS}.
We summarize this discussion.

\begin{theorem}
 The Schubert problem of\/ $4$-planes that have at least a two-dimensional intersection with
 each of four general real $n$-planes in $\C^{2n}$ has $\binom{n}{2}$ solutions.
 The number of real solutions is 
\[
   \binom{r}{2}\ +\ c\,,
\]
 where the auxiliary problem of\/ $2$-planes meeting each of four general real $n$-planes in
 $\C^{2n}$ has $r$ real solutions and $c$ pairs of complex conjugate solutions and 
 $r+2c=n$.
\end{theorem}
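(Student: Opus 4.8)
The plan is to reduce both the enumeration and the reality count entirely to the auxiliary problem on $G(2,2n)$, exactly as set up in the preceding discussion, and to make each step of that reduction precise. First I would fix four general real $n$-planes $W_1,\dotsc,W_4\subset\C^{2n}$ and let $P_1,\dotsc,P_n$ be the $n$ solutions of the auxiliary problem, namely the $2$-planes meeting each $W_i$. The heart of the matter is the claim that a $4$-plane $H$ solves the original problem if and only if $H=P_i+P_j$ for some pair $i\neq j$. For the forward direction, when $\dim(H\cap W_i)\geq 2$ for all $i$ the four subspaces $H\cap W_i$ are, for general $W_i$, two-dimensional subspaces of $H\simeq\C^4$, and the $2$-planes of $H$ meeting all four of them form an instance of $\I^4=2$ on $G(2,H)=G(2,4)$; each such $2$-plane meets every $W_i$ (as $H\cap W_i\subseteq W_i$), so its two solutions are among the global $P_\ell$ and generically span $H$. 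Conversely any span $P_i+P_j$ meets each $W_i$ in at least two dimensions, since it contains $(P_i\cap W_i)+(P_j\cap W_i)$. This identifies solutions with unordered pairs $\{P_i,P_j\}$, yielding $\binom{n}{2}$ solutions; I would cite Vakil~\cite[\S 3.13]{MR2247966} for this structure.

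Next I would run the reality count through the action of complex conjugation. Because the $W_i$ are real, conjugation $P\mapsto\overline{P}$ permutes $\{P_1,\dotsc,P_n\}$, fixing the $r$ real solutions and interchanging the members of the $c$ complex conjugate pairs, where $r+2c=n$. Since conjugation sends $P_i+P_j$ to $\overline{P_i}+\overline{P_j}$, the $4$-plane $P_i+P_j$ is real precisely when the unordered pair $\{P_i,P_j\}$ is fixed by conjugation; as conjugation is an involution on pairs, this occurs exactly when both $P_i$ and $P_j$ are real, or when $\{P_i,P_j\}$ is one of the $c$ conjugate pairs. The pairs of real solutions number $\binom{r}{2}$, and each conjugate pair contributes one further real solution $P+\overline{P}$, so the number of real solutions is $\binom{r}{2}+c$, as claimed. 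The admissible range $0\leq r\leq n$ with $n-r$ even follows from the parity of the non-real solutions together with the elementary-geometry description of the auxiliary problem in~\cite[\S 8.1]{So97a}, matching the factorization statistics of~\cite[Theorem 7.8]{SS}.

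The main obstacle is not this reality bookkeeping but the rigorous justification of the bijection in the first paragraph. For general real $W_i$ one must check that each $H\cap W_i$ is exactly two-dimensional, that the two $G(2,4)$-solutions inside a solution $H$ are distinct and span $H$, that distinct pairs $\{P_i,P_j\}$ have distinct spans (injectivity), and that every pair spans an honest solution and that the resulting intersection is transverse (surjectivity and reducedness), so that the count is exactly $\binom{n}{2}$. These genericity and transversality verifications—rather than the combinatorial count—are where the real work lies, and I expect them to be the delicate part of a complete argument.
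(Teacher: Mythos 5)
Your proposal follows essentially the same route as the paper: reduce to the auxiliary problem of $2$-planes meeting the four general real $n$-planes, identify solutions of the original problem with unordered pairs of auxiliary solutions via their spans, and count the real spans using the conjugation action to get $\binom{r}{2}+c$. The paper's own argument is in fact terser than yours---it asserts the pair correspondence with a citation to Vakil's discussion rather than carrying out the genericity, injectivity, and transversality checks you rightly flag---so your write-up matches and somewhat amplifies its proof.
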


\providecommand{\bysame}{\leavevmode\hbox to3em{\hrulefill}\thinspace}
\providecommand{\MR}{\relax\ifhmode\unskip\space\fi MR }
\providecommand{\MRhref}[2]{%
  \href{http://www.ams.org/mathscinet-getitem?mr=#1}{#2}
}
\providecommand{\href}[2]{#2}

\end{document}